\theoremstyle{plain}
\newtheorem{theorem}{Theorem}
\newtheorem{proposition}[theorem]{Proposition}
\newtheorem{corollary}[theorem]{Corollary}
\newtheorem{lemma}[theorem]{Lemma}
\theoremstyle{definition}
\newtheorem{remark}[theorem]{Remark}
\newtheorem{definition}[theorem]{Definition}
 \newtheorem{case}{Case}
\newcommand{\goesuo}{\xrightarrow{\mathrm{uo}}}	
\DeclareSymbolFont{bbold}{U}{bbold}{m}{n}
\DeclareSymbolFontAlphabet{\mathbbold}{bbold}
\DeclareMathOperator{\supp}{supp}
\begin{document}

\title{Discrete stopping times in the lattice of continuous functions}

\author{Achintya Raya Polavarapu}
\email{polavara@ualberta.ca}

\address{Department of Mathematical and Statistical Sciences,
  University of Alberta, Edmonton, AB, T6G\,2G1, Canada.}

\keywords{vector lattice, stochastic process, stopping time,
stopped process, universal completion}
\subjclass[2010]{46A40, 46E05, 60G20, 60G40.}


\date{\today}

\begin{abstract}
A functional calculus for an order complete vector lattice $\mathcal{E}$ was developed by Grobler in \cite{MR3151817} using the Daniell integral. We show that if one represents the universal completion of $\mathcal{E}$ as $C^\infty(K)$, then the Daniell functional calculus for continuous functions is exactly the pointwise composition of functions in $C^\infty(K)$. This representation allows an easy deduction of the various properties of the functional calculus. Afterwards, we study discrete stopping times and stopped processes in $C^\infty(K)$. We obtain a representation that is analogous to what is expected in probability theory.
\end{abstract}

\maketitle

\section{Introduction and Preliminaries}
In the first half of this article, we prove some results regarding the functional calculus in vector lattices. Specifically, the main result shows that when represented in $C^\infty(K)$, the Daniell functional calculus introduced by Grobler in \cite{MR3151817} is the pointwise composition of functions for continuous functions. In the second half, we investigate discrete stopping times and their representation in $C^\infty(K)$. In Section \ref{discrete_stopping_times}, we show that discrete stopping times in vector lattices correspond to a certain class of elements in the sup-completion. Moreover, when elements of the vector lattice are viewed as functions in $C^\infty(K)$, the discrete stopping times and stopped processes have a representation similar to what is expected in classical probability. This allows an easy deduction of various properties of discrete stopping times in vector lattices. Furthermore, we provide a generalization of Début theorem to the abstract setting. For more information about stopping times and hitting times in the classical probability theory, we refer the reader to \cite{MR3642325}. 

\subsection{\texorpdfstring{$C^\infty(K)$}{TEXT} representation}
We refer the reader to \cite{MR2011364, MR2262133} for the theory of vector lattices. Given a Hausdorff topological space $K$, we will write $C(K)$ for the set of all continuous functions from $K$ to $\mathbb{R}$. The space $C(K)$ is a vector lattice under pointwise operations, and if $K$ is an extremally disconnected Hausdorff topological space, then $C(K)$ is order (or Dedekind) complete. A subset $A$ of a Hausdorff topological space is nowhere dense if $(\overline{A})^\circ = \emptyset$. This also implies that $\partial A$ is a nowhere dense set for every closed set $A$. A set $A$ is meagre if it is a union of a sequence of nowhere dense sets, and conversely, a set is co-meagre if its complement is meagre. $K$ is a Baire space if countable unions of closed sets with empty interior also have empty interior. Every compact Hausdorff space is a Baire space.

Given that $K$ is an extremally disconnected compact Hausdorff topological space, we write $C^\infty(K)$ for the vector lattice of all continuous functions from $K$ to $[-\infty,\infty]$ that are finite almost everywhere (a.e.), that is, except on a nowhere dense set. Two functions $f$ and $g$ in $C^\infty(K)$ are equal, provided that the set of points where their values differ is a nowhere dense set. Scalar multiplication, addition and lattice operations on $C^\infty(K)$ are defined pointwise a.e. It should be noted that $C^\infty(K)$ is an f-algebra with product defined pointwise a.e. Maeda-Ogasawara Theorem states that every Archimedean vector lattice $\mathcal{E}$ may be represented as an order dense sublattice of $C^\infty(K)$, where $K$ is an extremally disconnected compact Hausdorff topological space. Throughout the paper, we fix an order complete vector lattice $\mathcal{E}$ with a weak unit $E$ and a Maeda-Ogasawara representation of $\mathcal{E}$ as an order dense ideal in $C^\infty(K)$, where $K$ is the Stone space of $\mathcal{E}$, with $E$ corresponding to $\mathbbm{1}$. 

We shall also use the concept of the sup-completion of $\mathcal{E}$ as introduced by \cite{MR653635} which we will denote by $\mathcal{E}^s$. We recall [\cite{PT2023}, Theorem 1] that states that the sup-completion of an order complete vector lattice $\mathcal{E}$ is $\mathcal{E}^s=\{f\in C(K,\overline{\mathbb{R}}):f\geq g \text{ for some } g\in \mathcal{E}\}$. We refer the reader to \cite{PT2023} for more properties regarding the representation of $\mathcal{E}^s$. Every element of the sup-completion can be decomposed into finite and infinite parts \cite{MR4315562, PT2023}. In terms of the above representation, this decomposition is as follows: the finite part $x$ of $u$ is defined as $x=P_U u$ where $U$ is the closure of  $\{u<\infty\}$. The infinite part $w$ of $u$ is defined as $P_{U^c}u$ and is the function that vanishes on $U$ and takes the value $\infty$ on $U^c$. Corollary 7 in \cite{MR3906369, PT2023} also gives us that $\mathcal{E}_+^u\subseteq \mathcal{E}_+^s$.

\subsection{Conditional expectation operator}
The motivation for the following definitions and theory can be found in \cite{MR2093170, MR2122234}. A conditional expectation $\mathbb{F}$ defined on $\mathcal{E}$ is an order continuous strictly positive linear projection whose range $\mathcal{R}(\mathbb{F})$ is an order complete vector lattice of $\mathcal{E}$ with $\mathbb{F}E = E$. Conditional expectation operators satisfy an averaging property [\cite{MR2122234}, Theorem 5.3] in the sense that if $f \in \mathcal{R}(\mathbb{F})$ and $g \in \mathcal{E}$ with $fg \in\mathcal{E}$ then $\mathbb{F}(fg)=f\mathbb{F}(g)$, where the product of two elements is done in $C^\infty(K)$. We note that the range of a conditional expectation operator is a regular sublattice of $\mathcal{E}$. 

Let $J\subset \mathbb{R}_+$. A filtration on $\mathcal{E}$ is a family $(\mathbb{F}_t)_{t\in J}$ of conditional expectations satisfying $\mathbb{F}_s=\mathbb{F}_s \mathbb{F}_t=\mathbb{F}_t \mathbb{F}_s$ for all $s\leq t$. We denote the range of $\mathbb{F}_t$ by $\mathcal{F}_t$. A stochastic process in $\mathcal{E}$ is a family $(X_t)_{t\in J}$ where $X_t\in\mathcal{E}$, with $J\subset \mathbb{R}^+$ an interval. The stochastic process $(X_t)_{t\in J}$ is right continuous if $\text{o-}\lim_{s\downarrow t} X_s = X_t$. The stochastic process $(X_t)_{t\in J}$ is adapted to the filtration if $X_t\in \mathcal{F}_t$ for all $t\in J$. If $(X_t)$ is a stochastic process adapted to $(\mathbb{F}_t)$, we call $(X_t, \mathbb{F}_t)$ a super-martingale (respectively sub-martingale) if $\mathbb{F}_t(X_s) \leq X_t$ (respectively $\mathbb{F}_t(X_s) \geq X_t$) for all $t \leq s$. The process is called a martingale if it is a sub-martingale and super-martingale.

\section{Functional Calculus}\label{functional_calculus}
The functional calculus developed by Grobler plays a major role in the theory of measure-free probability in vector lattices \cite{MR3573115,MR3777886,MR3111875,azouzi2022generating}. In the following subsection, we briefly outline the construction of the functional calculus using the Daniell integral, as shown in \cite{MR3151817}. 

\subsection{Daniell Integral}
Denote by $F(\mathbb{R})$ the algebra consisting of all finite unions of disjoint left open right closed intervals $(a, b], (a,\infty)$ and $(\infty, b]$ with $a, b \in \mathbb{R}$. Let $\mathbb{L}$ be the vector lattice of real valued functions of the form:
\begin{equation}\label{piecewise_function_definition}
f = \sum_{i=1}^{n} a_i \mathbbm{1}_{S_i}, S_i\in F(\mathbb{R})
\end{equation}
where $(S_i)_{i=1}^n$ is a partition of $\mathbb{R}$. The order relation of $\mathbb{L}$ is defined by $f\leq g$ if $f(t)\leq g(t)$ for every $t\in\mathbb{R}$. 

\begin{definition}
A positive linear function $I : \mathbb{L} \to \mathcal{E}$ is called an $\mathcal{E}-$valued Daniell integral on $\mathbb{L}$ whenever, for every sequence $(f_n)$ in $\mathbb{L}$ that satisfies $f_n(t)\downarrow 0$ for every $t\in\mathbb{R}$, it follows that $I(f_n)\downarrow 0$.
\end{definition}
We note that $I$ need not be an order continuous operator. Consider $I:\mathbb{L}\to\mathbb{R}$ where $I(f)=f(0)$. Then $I$ is a $\mathbb{R}-$valued Daniell integral and consider the sequence $f_n=\mathbbm{1}_{(-\frac{1}{n},0]}$. Then $f_n\downarrow 0$ in $\mathbb{L}$ but $I(f_n)\not\to 0$.\\

We shall detail the construction of a specific Daniell integral developed by Grobler that shall be useful in defining a functional calculus on $\mathcal{E}$. For $Y\in\mathcal{E}$ we denote $P_Y$ to be the band projection associated with the band generated by $Y$. Fix $X\in\mathcal{E}$, then the right continuous spectral system of $X$ is the increasing right-continuous stochastic process $A=(A_t)_{t\in \mathbb{R}}$ where $A_t = E - P_{(X-tE)^+}E$. We denote by $A_\infty$ and $A_{-\infty}$ respectively, the supremum and infimum of the process. To define the $\mathcal{E}-$valued Daniell integral, we define a vector lattice measure $\mu_A$ with respect to $(A_t)_{t\in \mathbb{R}}$ as follows:
\begin{itemize}
    \item $\mu_A(a, b] = A_b - A_a$ where $(a,b]\in F(\mathbb{R})$.
    \item  For any finite disjoint union of half open intervals, we have $\mu_A(\cup_{i=1}^{n}I_i) = \sum_{i=1}^n \mu_A(I_i)$
\end{itemize}

Then $\mu_A$ is a countably additive $\mathcal{E}-$valued measure on $F(\mathbb{R})$ as shown in [Lemma 3.7, \cite{MR3151817}] and the functional calculus is defined for elements of $\mathbb{L}$ as: 
$$ I(f) = \sum_{i=1}^{n}a_i \mu_A(S_i), \text{ where } f\in\mathbb{L}\text{ as in } (\ref{piecewise_function_definition})$$
Define $\mathbb{L}^\uparrow := \{f:\mathbb{R}\to \mathbb{\overline{R}} : \exists (f_n)_{n\in\mathbb{N}}\subset \mathbb{L},\text{ such that } f_n(t)\uparrow f(t) \text{ for every } t\in\mathbb{R}\}$. Then the integral $I:\mathbb{L}\to\mathcal{E}$ can be extended to an integral from $\mathbb{L}^\uparrow$ to $\mathcal{E}^s$ as follows: for $f\in\mathbb{L}^\uparrow$, we define $I(f)=\sup_n I(f_n)$ where $(f_n)$ is a sequence in $\mathbb{L}$ such that $f_n\uparrow f$. Then [Lemma 3.2, \cite{MR3151817}] states that this extension is well-defined. The extension also satisfies the following properties.

\begin{lemma}[Lemma 3.4, \cite{MR3151817}]\label{extension_properties}
 The extension of $I$ to $\mathbb{L}^\uparrow$ is well-defined and satisfies the following the properties. 
\begin{itemize}
    \item If $f, g \in\mathbb{L}^\uparrow$ and $f \leq g$, then $I(f)\leq I(g)$;
    \item If $f\in\mathbb{L}^\uparrow$ and $0 \leq c <\infty$, then $cf \in\mathbb{L}^\uparrow$ and $I(cf) = cI(f)$;
    \item If $f,g \in\mathbb{L}^\uparrow$, then $f+g\in\mathbb{L}^\uparrow$ and $I(f+g)=I(f)+I(g)$;
\item  If $(f_n)_{n\in\mathbb{N}}$ is a sequence in $\mathbb{L}^\uparrow$ and $f_n(t)\uparrow f(t)$ for every $t$ , then $f\in\mathbb{L}^\uparrow$ and $I(f_n)\uparrow I(f)$.
\end{itemize}
\end{lemma}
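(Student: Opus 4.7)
My strategy is to establish (i) first, since monotonicity is the foundation for the other parts. Once (i) is in hand, (ii) and (iii) transfer from $\mathbb{L}$ to $\mathbb{L}^\uparrow$ by taking suprema of approximating sequences, and (iv) follows from a diagonal argument. Well-definedness of the extension is granted [Lemma 3.2, \cite{MR3151817}], so given any $f\in\mathbb{L}^\uparrow$ and any $\mathbb{L}$-sequence $f_n\uparrow f$, the quantity $I(f) := \sup_n I(f_n)\in\mathcal{E}^s$ is unambiguous.

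For (i), I would pick approximating sequences $f_n\uparrow f$ and $g_m\uparrow g$ in $\mathbb{L}$ and exploit the pointwise dominance $f_n\leq g = \lim_m g_m$ to deduce $(f_n - g_m)^+\downarrow 0$ pointwise as $m\to\infty$ for each fixed $n$. The Daniell property applied to this sequence in $\mathbb{L}$ forces $I((f_n - g_m)^+)\downarrow 0$ in $\mathcal{E}$, and combining with the obvious pointwise inequality $f_n\leq g_m + (f_n - g_m)^+$ in $\mathbb{L}$ gives $I(f_n)\leq I(g_m) + I((f_n-g_m)^+)$; sending $m\to\infty$ and then $n\to\infty$ yields $I(f)\leq I(g)$.

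Parts (ii) and (iii) are then short: for $f_n\uparrow f$ in $\mathbb{L}$ one has $cf_n\uparrow cf$ for $c\geq 0$, and if additionally $g_n\uparrow g$ then $f_n + g_n\uparrow f+g$ pointwise; the positive homogeneity and additivity of $I$ on $\mathbb{L}$, together with the facts that scalar multiplication by $c\geq 0$ commutes with $\sup$ and that $\sup_n(a_n+b_n) = \sup_n a_n + \sup_n b_n$ for increasing sequences in the order complete $\mathcal{E}^s$, deliver the claims.

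The main obstacle is (iv), which requires a diagonal construction. Choosing $(f_{n,k})_k\subset\mathbb{L}$ with $f_{n,k}\uparrow_k f_n$ for each $n$, I would set $g_k := \bigvee_{j\leq k} f_{j,k}\in\mathbb{L}$ and verify $g_k\uparrow f$ pointwise using the monotonicity of $(f_n)$: the upper bound $g_k\leq f_k\leq f$ comes from $f_{j,k}\leq f_j\leq f_k$ for $j\leq k$, while the lower bound $\lim_k g_k\geq f_n$ follows from $g_k\geq f_{n,k}$ for $k\geq n$. This both proves $f\in\mathbb{L}^\uparrow$ and furnishes $I(f) = \sup_k I(g_k)$. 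The sandwich $I(f_{n,k})\leq I(g_k)\leq I(f_k)$, obtained via (i), together with $\sup_k I(f_{n,k}) = I(f_n)$, pins down $I(f_n)\uparrow I(f)$. The delicate point is juggling the two layers of suprema—one over $k$ inside each approximation, one over $n$ across the outer sequence—without circularity; once (i) is available, everything else is standard lattice bookkeeping in $\mathcal{E}^s$.
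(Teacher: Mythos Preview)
The paper does not prove this lemma at all: it is stated as a direct citation of Lemma~3.4 in \cite{MR3151817} and is used as a black box thereafter. So there is no in-paper proof to compare against.

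Your argument is the standard Daniell-extension proof and is correct. The one place to be a little careful is in (i), when you pass to the limit in $I(f_n)\le I(g_m)+I((f_n-g_m)^+)$: since $I(g)$ may lie in $\mathcal{E}^s\setminus\mathcal{E}$, you cannot freely subtract, but you can first bound $I(g_m)\le I(g)$ and then take $\inf_m$ on the right using that addition with a fixed element of $\mathcal{E}^s$ preserves infima of decreasing sequences in $\mathcal{E}_+$. With that caveat, the diagonal construction in (iv) and the sandwich $I(g_k)\le I(f_k)$ (which uses (i) applied to $g_k\in\mathbb{L}\subset\mathbb{L}^\uparrow$ and $f_k\in\mathbb{L}^\uparrow$) close the argument cleanly.
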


\subsection{Representation of functional calculus}
For $A\subseteq\mathbb{\overline{R}}$ and $\tau:K\to\overline{\mathbb{R}}$, we shall use the notation $\{\tau\in A\}$ to denote the set $\{\omega\in K: \tau(\omega)\in A\}$. 
We will need the following criterion for the convergence of sequences in a sup-completion. This essentially follows from the results in Section 3 of \cite{MR4366912}. However they are not stated there in the form of the sup-completion, and thus we repeat certain arguments in the next two lemmas.

\begin{lemma}\label{topological_sup_convergence}
For $G \subseteq \mathcal{E}^s_+, \sup G = \infty\mathbbm{1}$ iff for every non-empty open set $U$ and every $n\in\mathbb{R}_+$ there exists a non-empty open set $V \subseteq U$ and $g \in G$ with $g(t) > n$ for all $t\in V$.
\end{lemma}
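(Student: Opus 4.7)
The plan is to prove both directions by contrapositive, using the representation $\mathcal{E}^s=\{f\in C(K,\overline{\mathbb{R}}):f\geq g\text{ for some }g\in\mathcal{E}\}$ and the fact that on an extremally disconnected $K$ the lattice order on $C(K,\overline{\mathbb{R}})$ coincides with the pointwise order. Indeed, if $f,g\in C(K,\overline{\mathbb{R}})$ satisfy $f\leq g$ in $\mathcal{E}^s$, then the set $\{f>g\}$ is open (as the preimage of an open subset of $\overline{\mathbb{R}}\times\overline{\mathbb{R}}$ under the continuous map $(f,g)$) and nowhere dense, hence empty. This pointwise interpretation of order and of suprema in $\mathcal{E}^s$ is what lets us translate the abstract condition $\sup G=\infty\mathbbm{1}$ into concrete statements about the sets $\{g>n\}$.

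For the easier implication ($\Leftarrow$), I would suppose $f:=\sup G\neq\infty\mathbbm{1}$. The open set $\{f<\infty\}$ is non-empty, so there exist $n\in\mathbb{R}_+$ and a non-empty open $U$ with $f<n$ on $U$. The hypothesis then yields a non-empty open $V\subseteq U$ and $g\in G$ with $g>n$ on $V$. But $g\leq f$ pointwise by the above, so $n<g(t)\leq f(t)<n$ on $V$, a contradiction.

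For ($\Rightarrow$) I would argue contrapositively: if the open-set condition fails, then there exist a non-empty open $U$ and $n\in\mathbb{R}_+$ such that, for every $g\in G$, the closed set $\{g\leq n\}$ meets every non-empty open subset of $U$ and hence contains $U$ (and, by closedness, $\overline{U}$). Let $U_0=(\overline{U})^\circ$, which is a non-empty \emph{clopen} set by extremal disconnectedness of $K$, so that $\mathbbm{1}_{U_0}\in C(K)$. Define
\[
h:=n\,\mathbbm{1}_{U_0}+\infty(\mathbbm{1}-\mathbbm{1}_{U_0}),
\]
a continuous function $K\to\overline{\mathbb{R}}$ with $h\equiv n$ on $U_0$ and $h\equiv\infty$ on $K\setminus U_0$. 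Since $h\geq 0$ we have $h\in\mathcal{E}^s_+$, and $g\leq h$ for every $g\in G$: on $U_0\subseteq\overline{U}$ we have $g\leq n=h$, and on $K\setminus U_0$ trivially $g\leq\infty=h$. But $h\neq\infty\mathbbm{1}$ because $U_0$ is non-empty, so $\sup G\leq h\neq\infty\mathbbm{1}$.

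The main obstacles are conceptual rather than computational: (i) explicitly identifying the order and suprema in $\mathcal{E}^s$ with their pointwise counterparts, so that the topological condition on $G$ can interact with an $\mathcal{E}^s$-supremum; and (ii) producing the continuous $\overline{\mathbb{R}}$-valued upper bound $h$, which is where extremal disconnectedness is used essentially (to pass from the open $U$ to a clopen $U_0\subseteq\overline{U}$ on which one can safely glue the values $n$ and $\infty$).
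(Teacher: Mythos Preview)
Your proof is correct and follows essentially the same contrapositive strategy as the paper's own proof. The only cosmetic differences are that you take $f=\sup G$ directly (rather than an arbitrary upper bound $f<\infty\mathbbm{1}$) and that you pass to the clopen set $U_0=(\overline{U})^\circ$ explicitly, whereas the paper simply uses $\overline{U}$, which is already clopen by extremal disconnectedness; the resulting upper bound $n\mathbbm{1}_{U_0}+\infty\mathbbm{1}_{K\setminus U_0}$ is the same in both cases.
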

\begin{proof}
Suppose that $\sup G\neq\infty\mathbbm{1}$. Then there exists $f \in\mathcal{E}^s_+$ with $G\leq f<\infty\mathbbm{1}$. Hence, there exists a point $t\in K$ such that $f(t)<\infty$. Since $K$ is a totally disconnected space, we can find a clopen subset $U\subset K$ containing $t$ such that $f(U)<\infty$. Hence, by compactness there exists $n\in\mathbb{N}$ such that $f$ is less than $n$ on $U$. It follows that every $g\in G$ is less than $n$ on every open subset $V$ of $U$.

Suppose now that there exists an open non-empty set $U$ and $n\in\mathbb{R}_+$ such that for every non-empty open subset $V\subseteq U$ and $g\in G$ there exists $t\in V$ such that $g(t)\leq n$. So for a given $g\in G$, we have that $\{g\leq n\}$ is a closed set and it intersects every open subset of $U$. This implies that $\{g\leq n\}$ contains $U$. Thus, we have that $\cap_{g\in G}\{g\leq n\}\supseteq U$ and clearly, $\sup G\leq \infty\mathbbm{1}_{\overline{U}^c}+n\mathbbm{1}_{\overline{U}}<\infty\mathbbm{1}$.
\end{proof}
The above topological property allows us to deduce properties regarding the convergence in sup-completion.

\begin{lemma}\label{convergence_criterion}
Let $X\in\mathcal{E}^s_+$ and $(X_n)_{n\in\mathbb{N}}\subset\mathcal{E}^s_+$ such that $X_n\uparrow X$. Then there exists a co-meagre set $M\subseteq K$ such that $X_n(t)\uparrow X(t)$ for every $t\in M$.
\end{lemma}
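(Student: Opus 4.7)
The plan is to establish the lemma by showing the bad set $N := \{t \in K : \sup_n X_n(t) < X(t)\}$ is meagre, so that its co-meagre complement will serve as the required set $M$. Since elements of $\mathcal{E}^s$ are continuous functions into $\overline{\mathbb{R}}$ and the order on $\mathcal{E}^s$ coincides with the pointwise order (two such continuous functions agreeing on a dense set must agree everywhere), the hypothesis $X_n \uparrow X$ gives $X_n(t) \le X_{n+1}(t) \le X(t)$ at every $t \in K$, so $N$ truly captures all failure points.

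To see $N$ is meagre, I would cover it by countably many closed sets: for each $t \in N$, the value $\sup_n X_n(t)$ is finite and strictly less than $X(t)$, so one can sandwich it with rationals $0 \le c < d \le X(t)$ (with $d$ finite, even when $X(t) = \infty$), putting $t \in A_{c,d}$, where
\[
A_{c,d} := \Bigl(\bigcap_n\{X_n \le c\}\Bigr) \cap \{X \ge d\}.
\]
Each $A_{c,d}$ is closed since the $X_n$ and $X$ are continuous $K \to \overline{\mathbb{R}}$, so it suffices to show each $A_{c,d}$ has empty interior, as there are only countably many such sets indexed by $\mathbb{Q}^2$.

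Suppose toward a contradiction that some $A_{c,d}$ contains a non-empty open set $V$. Then $X_n \le c$ on $V$ for every $n$, and by continuity this inequality extends to $\overline{V}$. Since $K$ is extremally disconnected, $\overline{V}$ is clopen, so $\mathbbm{1}_{\overline{V}}$ is continuous and the patched function
\[
Y := c\,\mathbbm{1}_{\overline{V}} + X\cdot\mathbbm{1}_{K\setminus\overline{V}}
\]
is a genuine continuous map $K \to \overline{\mathbb{R}}$ with $Y \ge 0$, so $Y \in \mathcal{E}^s_+$. By construction $Y \ge X_n$ everywhere, making $Y$ an upper bound of $(X_n)$ in $\mathcal{E}^s$; since $\sup_n X_n = X$ this forces $X \le Y$ pointwise, yet on $V$ we have $X \ge d > c = Y$, a contradiction. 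The main subtlety is exactly the construction of $Y$: patching the constant $c$ and the function $X$ along $\partial V$ into a single continuous function requires $\overline{V}$ to be clopen, which is precisely the role played by the extremal disconnectedness of $K$.
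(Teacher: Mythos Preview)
Your argument is correct, and it takes a genuinely different route from the paper's. The paper first decomposes $K$ into the clopen set $U=\overline{\{X<\infty\}}$ and its complement; on $U$ it invokes an external result (Remark~4.1 of \cite{MR4366912}) for order convergence in $C^\infty(K)$, and on $U^c$ it appeals to the preceding Lemma~\ref{topological_sup_convergence} to build dense open sets $V_n=\bigcup_{g}\{g>n\}$ whose intersection is co-meagre. Your proof bypasses both the finite/infinite split and the auxiliary lemma: you cover the failure set directly by the closed sets $A_{c,d}$ and kill each one with a single patched upper bound $Y$, using only extremal disconnectedness and the fact that the order on $\mathcal{E}^s$ is the pointwise order on $C(K,\overline{\mathbb{R}})$. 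This is more self-contained and arguably cleaner; the paper's approach, on the other hand, makes explicit use of Lemma~\ref{topological_sup_convergence}, which it needs elsewhere anyway, so reusing it there is natural in context.
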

\begin{proof}
Let us denote $U=\overline{\{X<\infty\}}$, which gives that $U$ is a clopen set and $X\mathbbm{1}_U\in C^\infty(K)$. Let us split all the elements as follows: $X_n = X_n\mathbbm{1}_U+X_n\mathbbm{1}_{U^c}$. We have that $X_n\mathbbm{1}_U\uparrow X\mathbbm{1}_U$ and hence by [Remark 4.1, \cite{MR4366912}], the convergence here is pointwise on a co-meagre subset of $U$. It remains to show this for $X\mathbbm{1}_{U^c}$. Note that for this function we have that the value is $\infty$ everywhere. 

Let $G=(X_n\mathbbm{1}_{U^c})_{n\in\mathbb{N}}\subset \mathcal{E}_+^s$. For $n \in\mathbb{N}$, put $V_n =\bigcup_{g\in G} \{g > n\}$. Clearly, $V_n$ is open and $V_n\subseteq U^c$. For every non-empty open set $W$, Lemma \ref{topological_sup_convergence} gives that there exists $t \in W$ and $g \in G$ such that $g(t)>n$. Hence $t \in V_n$. That is, for every open set $W$, there exists a point $t\in W$ such that $t\in V_n$. Hence, $V_n$ is a a dense subset of $U^c$. Let $V := \bigcap_{n=1}^\infty V_n$, then $V$ is co-meagre subset of $U^c$. Let $t \in V$, then for every $n \in \mathbb{N}$ we have $t \in V_n$, and hence $\sup_{g\in G} g(t) > n$. It follows that $\sup_{g\in G} g(t) = \infty$. Therefore, upon combining the co-meagre subsets of $U$ and $U^c$, we obtain the desired co-meagre subset of $K$. 
\end{proof}

Given a continuous function $f\in C(\mathbb{R})$ and $X\in C^\infty(K)$, let $U$ be the set on which $X$ is finite. Clearly, $U$ is open and dense, and the composition of $f$ and $X$ is defined, finite, and continuous on $U$. Then the composition extends uniquely to a function in $C^\infty(K)$. We shall denote this extended function by $f\circ X$. Also note that by the Maeda-Ogasawara theory, there is a one-to-one correspondence between clopen subsets of $K$ and bands in $\mathcal{E}$. For an element $Y\in\mathcal{E}$, the clopen set corresponding to the band generated by $Y$ is the set $\overline{\{Y\neq 0\}}$. Hence the corresponding band projection of $E$ is $P_Y E = \mathbbm{1}_{\overline{\{Y\neq 0\}}}$. 

\begin{lemma}
\label{piecewise_functional_representation}
Let $f\in\mathbb{L}$ and  $\{-\infty=\gamma_0<\gamma_1<\dots<\gamma_{n}<\infty\}$ be a partition of $\mathbb{R}$ such that $f=a_\infty\mathbbm{1}_{(\gamma_n,\infty)}+\sum_{i=1}^n a_i \mathbbm{1}_{(\gamma_{i-1},\gamma_i]}$. Then the Daniell integral of $f$ is 
$$I(f)=a_\infty\mathbbm{1}_{K\setminus\overline{\{X>b\}}}+\sum_{i=1}^n a_i\mathbbm{1}_{\overline{\{X>\gamma_{i-1}\}}\setminus\overline{\{X>\gamma_i\}}}$$
\end{lemma}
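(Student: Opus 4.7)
The plan is to use linearity of the Daniell integral $I$ on $\mathbb{L}$ and to compute $\mu_A$ on each half-open interval inside the $C^\infty(K)$ picture. The key translation step is that the band in $\mathcal{E}$ generated by $(X-tE)^+$ corresponds under Maeda-Ogasawara to the clopen set $\overline{\{(X-tE)^+ \neq 0\}} = \overline{\{X > t\}}$; hence $P_{(X-tE)^+}E = \mathbbm{1}_{\overline{\{X > t\}}}$, and therefore
$$
A_t \;=\; E - P_{(X-tE)^+}E \;=\; \mathbbm{1}_{K \setminus \overline{\{X > t\}}}.
$$
Once this identity is in hand, everything else is essentially set-theoretic bookkeeping.

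By the construction of $\mu_A$ on $F(\mathbb{R})$ and linearity of $I$ on $\mathbb{L}$, I would write
$$
I(f) \;=\; a_\infty\bigl(A_\infty - A_{\gamma_n}\bigr) \;+\; \sum_{i=1}^n a_i\bigl(A_{\gamma_i} - A_{\gamma_{i-1}}\bigr).
$$
For each $i\geq 1$, the monotonicity $\gamma_{i-1} < \gamma_i$ forces $\overline{\{X > \gamma_i\}} \subseteq \overline{\{X > \gamma_{i-1}\}}$, so each difference $A_{\gamma_i} - A_{\gamma_{i-1}}$ collapses to the indicator $\mathbbm{1}_{\overline{\{X > \gamma_{i-1}\}} \setminus \overline{\{X > \gamma_i\}}}$, which is the desired summand. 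The case $i = 1$ is handled by the convention $\overline{\{X > -\infty\}} = K$ together with $A_{-\infty} = 0$, and the leftover unbounded piece $a_\infty(A_\infty - A_{\gamma_n})$ is treated identically.

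The one nontrivial step is identifying the boundary values $A_{-\infty} = 0$ and $A_\infty = E$, which are the $\mathcal{E}^s$-infimum and supremum of the net $(A_t)_{t \in \mathbb{R}}$. Since $X \in \mathcal{E}$ is a.e.\ finite in $C^\infty(K)$, the set $\{X > -\infty\}$ is a dense open subset of $K$, whence $\overline{\{X > t\}} \to K$ as $t \downarrow -\infty$ and $A_{-\infty} = 0$; dually, $\{X > t\}$ shrinks into the nowhere-dense set $\{X = \infty\}$ as $t \uparrow \infty$, forcing $A_\infty = \mathbbm{1}$. I expect this to be the main obstacle: while the formula on bounded intervals follows from a telescoping cancellation of indicator functions, the behaviour at $\pm\infty$ needs the pointwise-on-a-comeagre-set convergence recorded in Lemma~\ref{convergence_criterion} to ensure that these $\mathcal{E}^s$-suprema really coincide with the naive pointwise limits in $C^\infty(K)$, so that the telescoping collapse produces exactly the stated indicator.
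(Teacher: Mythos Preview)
Your proposal is correct and follows essentially the same route as the paper: identify $P_{(X-tE)^+}E = \mathbbm{1}_{\overline{\{X>t\}}}$ via the band--clopen correspondence, then compute $\mu_A$ on each interval of the partition and combine by linearity of $I$.

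The only point worth flagging is that you over-anticipate the difficulty of the boundary terms $A_{\pm\infty}$. The paper does not invoke Lemma~\ref{convergence_criterion} here at all: since every $A_t$ is a component of $E$, the suprema and infima in question live entirely inside $\mathcal{E}$ (indeed inside the order interval $[0,E]$), not in $\mathcal{E}^s$. The paper simply writes $\sup_{b}\mathbbm{1}_{\overline{\{X>a\}}\setminus\overline{\{X>b\}}}=\mathbbm{1}_{\overline{\{X>a\}}}$ directly. This is the elementary fact that for a decreasing family of clopen sets $U_b$ one has $\inf_b \mathbbm{1}_{U_b}=\mathbbm{1}_{(\bigcap_b U_b)^\circ}$, together with $\bigcap_b\overline{\{X>b\}}\subseteq\{X=\infty\}$ being nowhere dense since $X\in C^\infty(K)$. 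So what you flagged as ``the main obstacle'' is in fact a one-line computation with indicators of clopen sets; no comeagre-set machinery is needed for this lemma.
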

\begin{proof}
Note that since the weak unit $E$ is fixed, it corresponds to $\mathbbm{1}$ in the stone representation. Now, we will first prove the result in the case when $f = \mathbbm{1}_S$ where $S = (a,b]\in F(\mathbb{R})$. So,
$$ I(f) = \mu_A(S) = A_b - A_a = (E-P_{(X-bE)^+}E)-(E-P_{(X-aE)^+}E)$$
$$  =  P_{(X-aE)^+}E - P_{(X-bE)^+}E  $$
For $(X-aE)^+$, we have the corresponding band projection $P_{(X-aE)^+}E = \mathbbm{1}_{\overline{\{X>a\}}}$. Hence,
$$I(f)=P_{(X-aE)^+}E - P_{(X-bE)^+}E = \mathbbm{1}_{\overline{\{X>a\}}\setminus\overline{\{X>b\}}}$$
It is easy to see that $\overline{\{X>a\}}\setminus\overline{\{X>b\}}$ is clopen. If $S=(a,\infty)$, the above argument can be adapted to give us
$$ I(f) = \mu_A(S) = A_\infty - A_a = \sup_{b\in \mathbb{R}}(E-P_{(X-bE)^+}E)-(E-P_{(X-aE)^+}E) $$
$$=  \sup_{b\in \mathbb{R}}(P_{(X-aE)^+}E - P_{(X-bE)^+}E)$$
$$=\sup_{b\in\mathbb{R}}\mathbbm{1}_{\overline{\{X>a\}}\setminus\overline{\{X>b\}}}$$
$$=\mathbbm{1}_{\overline{\{X>a\}}}$$
Similarly, when $S=(-\infty, b]$, we have $I(f)=\mathbbm{1}_{K\setminus\overline{\{X>b\}}}$. Therefore when $f\in\mathbb{L}$ is a piece-wise constant function of the form $f=a_\infty\mathbbm{1}_{(\gamma_n,\infty)}+\sum_{i=1}^n a_i \mathbbm{1}_{(\gamma_{i-1},\gamma_i]}$, we have:
$$I(f)=a_\infty\mathbbm{1}_{K\setminus\overline{\{X>b\}}}+\sum_{i=1}^n a_i\mathbbm{1}_{\overline{\{X>\gamma_{i-1}\}}\setminus\overline{\{X>\gamma_i\}}}$$
\end{proof}

\begin{lemma}\label{pointwise_piecewise_equal}
Let $f\in \mathbb{L}$ such that $f$ has bounded support. Then for an element $X\in \mathcal{E}$, there exists an open dense set $H\subseteq K$ such that $I(f)(\omega)=f(X(\omega))$ for every $\omega\in H$.
\end{lemma}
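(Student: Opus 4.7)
The plan is to apply Lemma \ref{piecewise_functional_representation} to obtain an explicit formula for $I(f)$ inside $C^\infty(K)$ and then verify the claimed pointwise identity on an open dense subset of $K$ obtained by removing a finite union of nowhere dense sets.

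Write $f=a_\infty\mathbbm{1}_{(\gamma_n,\infty)}+\sum_{i=1}^n a_i\mathbbm{1}_{(\gamma_{i-1},\gamma_i]}$ with $\gamma_0=-\infty$. Because $f$ has bounded support, say contained in $[-M,M]$, evaluating $f$ at any point $t<-M$ in $(-\infty,\gamma_1]$ (respectively, $t>M$ in $(\gamma_n,\infty)$) forces $a_1=0$ (respectively, $a_\infty=0$). Lemma \ref{piecewise_functional_representation} therefore reduces the representation to
$$I(f)=\sum_{i=2}^n a_i\,\mathbbm{1}_{\overline{\{X>\gamma_{i-1}\}}\setminus\overline{\{X>\gamma_i\}}}.$$

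Next, I would set
$$H=K\setminus\Bigl(\{X=+\infty\}\cup\{X=-\infty\}\cup\bigcup_{i=1}^n\partial\{X>\gamma_i\}\Bigr)$$
and check that $H$ is open and dense. Since $X\in C^\infty(K)$, the sets $\{X=\pm\infty\}$ are closed and nowhere dense. For each $i$, the set $\{X>\gamma_i\}$ is open in $K$, and the boundary of any open set is closed with empty interior (any non-empty open subset of its closure must meet the set itself); so each $\partial\{X>\gamma_i\}$ is closed and nowhere dense. Thus $H$ is the complement of a finite union of closed nowhere dense sets, hence open and dense.

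Now fix $\omega\in H$. Then $X(\omega)$ is finite, and for each $i$ one of the two alternatives must hold: either $X(\omega)>\gamma_i$, so $\omega\in\{X>\gamma_i\}\subseteq\overline{\{X>\gamma_i\}}$; or $X(\omega)\le\gamma_i$, so $\omega$ lies in the interior of the complement of $\overline{\{X>\gamma_i\}}$ (using that $\omega\notin\partial\{X>\gamma_i\}$), whence $\omega\notin\overline{\{X>\gamma_i\}}$. Let $i^*$ be the unique index with $X(\omega)\in(\gamma_{i^*-1},\gamma_{i^*}]$, using the convention $\gamma_{n+1}=\infty$. Applying the dichotomy index by index shows that $\omega$ belongs to the $i$-th difference set $\overline{\{X>\gamma_{i-1}\}}\setminus\overline{\{X>\gamma_i\}}$ precisely when $i=i^*\in\{2,\dots,n\}$; and if $i^*\in\{1,n+1\}$ then $\omega$ lies in no difference set at all, which matches $f(X(\omega))=0$. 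In every case we obtain $I(f)(\omega)=f(X(\omega))$.

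The main obstacle, as I see it, is the bookkeeping in the last step: it is essential to remove exactly the right nowhere dense pieces, so that the closures $\overline{\{X>\gamma_i\}}$ behave like $\{X>\gamma_i\}$ on $H$, after which the match between the interval of $\mathbb{R}$ containing $X(\omega)$ and the corresponding clopen difference set becomes automatic.
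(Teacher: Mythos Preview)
Your proof is correct and follows essentially the same route as the paper. The paper writes the exceptional sets as $H_j=\overline{\{X>\alpha_j\}}\cap\{X\le\alpha_j\}$ together with $\{X=\infty\}$, which is exactly your $\partial\{X>\gamma_j\}$; the remaining differences (you also discard $\{X=-\infty\}$, and you keep the full partition of $\mathbb{R}$ with $a_1=a_\infty=0$ rather than restricting to the support) are purely cosmetic.
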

\begin{proof}
Let us suppose that $(a,b]$ is the support of $f$. Then there exists a partition $\{a= \alpha_0 < \alpha_1 < \dots < \alpha_{k}=b \}$ of $(a,b]$ such that $f = \sum_{j=1}^{k} a_j \mathbbm{1}_{(\alpha_{j-1}, \alpha_j]}$. Then Lemma \ref{piecewise_functional_representation} gives us
$$I(f)=\sum_{j=1}^k a_j\mathbbm{1}_{\overline{\{X>\alpha_{j-1}\}}\setminus\overline{\{X>\alpha_j\}}}$$
Now set $H_{j}=\overline{\{X>\alpha_j\}}\cap\{X\leq \alpha_j\}$ and $N=\{X=\infty\}$. Now, $\{X>\alpha_j\}$ is an open set and $\{X\leq \alpha_j\}$ is a closed set of $K$ satisfying $\{X>\alpha_j\}\cap\{X\leq \alpha_j\}=\emptyset$ which implies that $H_{j}$ is a closed nowhere dense set. Since each of the sets are closed and nowhere dense, upon setting $M=\biggl(\bigcup_{j=1}^{k}H_{j}\biggr)\cup N$ we have that $M$ is a closed nowhere dense set and $H:=K\setminus M$ is an open dense set. Let $\omega\in H$. Then we consider two separate cases:
\begin{case}
If $X(\omega)\notin(a,b]$: Then $f(X(\omega))=0=I(f)(\omega)$.
\end{case}
\begin{case}
If $X(\omega)\in(a,b]$: Then there exists $j_0$ such that $X(\omega)\in(\alpha_{j_0-1},\alpha_{j_0}]$: Then we have that $f(X(\omega))=a_{j_0}$. However, we have that $\omega\in \overline{\{X>\alpha_{j_0-1}\}}$ and $\omega\in \{X\leq \alpha_{j_0}\}$. By definition of the set $H$, $\omega\notin\overline{\{X>\alpha_{j_0}\}}$ and thus, $I(f)(\omega)=a_{j_0}$. Hence, $I(f)(\omega)=f(X(\omega))$.
\end{case}
Therefore, we have that $I(f)(\omega)=f(X(\omega))$ for every $\omega\in H$. This implies that $f\circ X=I(f)\in\mathcal{E}$ when $f\in\mathbb{L}$.
\end{proof}

\begin{lemma}\label{positive_bounded_support}
Let $f\in C(\mathbb{R})$ be a positive continuous function with bounded support. Then for an element $X\in \mathcal{E}$, we have $I(f)=f\circ X\in \mathcal{E}_+$.
\end{lemma}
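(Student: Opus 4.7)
The plan is to approximate $f$ pointwise from below by positive piecewise constant functions in $\mathbb{L}$ and combine the previous lemmas via a Baire-category argument. First I would choose a sequence $(f_n)\subset\mathbb{L}$ of positive piecewise constant functions, all supported in a fixed bounded half-open interval $(a,b]\supseteq\supp f$, with $f_n(t)\uparrow f(t)$ for every $t\in\mathbb{R}$; such a sequence is produced by the standard refinement of dyadic partitions of $(a,b]$, letting $f_n$ take on each subinterval the infimum of $f$ there (attained by continuity of $f$). By Lemma \ref{extension_properties}, $I(f_n)\uparrow I(f)$ in $\mathcal{E}^s$, and positivity of $I$ on $\mathbb{L}$ gives $I(f)\geq 0$. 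Moreover $f_n\leq \|f\|_\infty\mathbbm{1}_{(a,b]}$, so Lemma \ref{piecewise_functional_representation} yields $I(f_n)\leq \|f\|_\infty(A_b-A_a)\leq \|f\|_\infty E$, whence $I(f)\leq \|f\|_\infty E$. Since $\mathcal{E}$ is an ideal in $C^\infty(K)$, this forces $I(f)\in\mathcal{E}_+$ (and in particular $I(f)\in C(K)$, as it is dominated by a finite continuous function).

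To identify $I(f)$ with $f\circ X$, I would invoke Lemma \ref{pointwise_piecewise_equal} to obtain, for each $n$, an open dense set $H_n\subseteq K$ on which $I(f_n)(\omega)=f_n(X(\omega))$. Then Lemma \ref{convergence_criterion} supplies a co-meagre set $M\subseteq K$ on which $I(f_n)(\omega)\uparrow I(f)(\omega)$ pointwise. Set $H:=M\cap\{X<\infty\}\cap\bigcap_n H_n$; since $K$ is a compact Hausdorff, and therefore Baire, space and $\{X<\infty\}$ is open dense (as $X\in C^\infty(K)$), the set $H$ is co-meagre, and in particular dense, in $K$. For every $\omega\in H$ the pointwise convergence $f_n\to f$ on $\mathbb{R}$ gives
\[
I(f)(\omega)=\lim_n I(f_n)(\omega)=\lim_n f_n(X(\omega))=f(X(\omega))=(f\circ X)(\omega).
\]
Both $I(f)$ and $f\circ X$ lie in $C(K)$ (the former by the boundedness above, the latter because $f$ is bounded continuous), so their agreement on the dense set $H$ forces them to coincide as elements of $C^\infty(K)$, completing the identification $I(f)=f\circ X$.

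The main obstacle is making sure that the several exceptional sets, namely $\{X=\infty\}$, the meagre set coming from Lemma \ref{convergence_criterion}, and the nowhere dense complements of the $H_n$'s, jointly form only a meagre subset of $K$, so that $I(f)$ and $f\circ X$ can in fact be compared on a dense set. The Baire property of $K$ is exactly what packages countably many co-meagre subsets into a single dense intersection; once that is in hand, the remainder of the argument is routine monotone-convergence bookkeeping from Lemma \ref{extension_properties} together with the explicit step-function formula of Lemma \ref{piecewise_functional_representation}.
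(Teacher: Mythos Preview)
Your proof is correct. The difference from the paper's argument is the mode of convergence you use. You approximate $f$ pointwise monotonically by step functions and then invoke Lemma~\ref{convergence_criterion} together with the Baire property of $K$ to intersect the countably many exceptional sets. The paper instead exploits the fact that a continuous function with compact support is \emph{uniformly} continuous, so one may choose the step functions $f_k$ with $0\le f-f_k\le \tfrac{1}{k}\mathbbm{1}_{\mathbb{R}}$; this immediately gives $0\le I(f)-I(f_k)\le \tfrac{1}{k}\mathbbm{1}_K$ and $0\le f\circ X-f_k\circ X\le \tfrac{1}{k}\mathbbm{1}_K$, i.e.\ relatively uniform convergence of both sequences in $\mathcal{E}$, and the identification $I(f)=f\circ X$ follows by passing to the limit without any Baire-category argument. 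Your route is essentially the one the paper later uses for general $f\in C(\mathbb{R})$ in Theorem~\ref{pointwise_final_theorem}; the paper's route here is shorter and keeps the whole argument inside $\mathcal{E}$ (via order completeness and relatively uniform limits), while yours shows that the Baire machinery already suffices at this stage.
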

\begin{proof}
Because $f$ has bounded support implies $f\leq \lambda \mathbbm{1}_{(a,b]}$ for some $a,b\in\mathbb{R}$. Then $I(f)\leq \lambda I(\mathbbm{1}_{(a,b]})\in\mathcal{E}$. As $\mathcal{E}$ is order complete, we have $I(f)\in\mathcal{E}_+$. Since $f$ has bounded support, $f$ is an uniformly continuous function and there exists a sequence $(f_k)$ in $\mathbb{L}$ such that $f_k\uparrow \leq f$ and $(f_k)$ converges to $f$ uniformly. WLOG, passing to a subsequence, we have $0\leq f-f_k\leq \frac{1}{k}\mathbbm{1}_{\mathbb{R}}$. By Lemma \ref{pointwise_piecewise_equal}, $f_k\circ X\in\mathcal{E}$ and thus, $f\circ X-f_k\circ X=(f-f_k)\circ X\leq (\frac{1}{k}\mathbbm{1}_{\mathbb{R}})\circ X=\frac{1}{k}\mathbbm{1}_K$. Therefore, $f_k\circ X$ converges to $f\circ X$ relatively uniformly in $\mathcal{E}$. Similarly, $I(f)-I(f_k)=I(f-f_k)\leq I(\frac{1}{k}\mathbbm{1}_{\mathbb{R}})=\frac{1}{k}\mathbbm{1}_K$. So $I(f_k)$ converges to $I(f)$ relatively uniformly in $\mathcal{E}$. Since $I(f_k)=f_k\circ X$ by Lemma \ref{pointwise_piecewise_equal}, passing to the limit gives, $I(f)=f\circ X$.

\end{proof}

In view of the above theorem, we will show that for continuous functions the Daniell functional calculus satisfies a nice representation. The idea for the proof of the theorem can be found in Groblers' paper. Let $\mathbb{L}^\uparrow_0$ be the set of all real valued positive functions in $\mathbb{L}^\uparrow$ and
$$\mathbb{L}_u = \{f:f=g-h \text{ such that } g,h\in\mathbb{L}^\uparrow_0 \text{ and } I(g),I(h)\in\mathcal{E}^u\}$$
Then analogous to the proof of [Proposition 3.5, \cite{MR3151817}] we can show that $\mathbb{L}_u$ is a vector space and $I$ has a well-defined extension to $\mathbb{L}_u$ by defining for $f=g-h\in\mathbb{L}_u$, $I(f) = I(g)-I(h)$. The proof also shows that the extension is positive and linear on $\mathbb{L}_u$.

\begin{theorem}\label{pointwise_final_theorem}
Let $f\in C(\mathbb{R})$. Then for an element $X\in \mathcal{E}$, we have $I(f)=f\circ X$.
\end{theorem}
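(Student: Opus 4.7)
The plan is to first reduce to the case of positive $f$ and then approximate by the bounded-support case handled in Lemma \ref{positive_bounded_support}. Pick a sequence $(\phi_n)\subset C(\mathbb{R})$ of continuous bump functions with $0\leq \phi_n\leq\phi_{n+1}\leq 1$, $\phi_n\equiv 1$ on $[-n,n]$, and $\phi_n\equiv 0$ outside $[-(n+1),n+1]$. For a positive $f\in C(\mathbb{R})$, set $f_n=f\phi_n$; each $f_n$ is positive continuous with bounded support, and $f_n\uparrow f$ pointwise on $\mathbb{R}$.

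By Lemma \ref{positive_bounded_support}, each $f_n$ lies in $\mathbb{L}^\uparrow_0$ with $I(f_n)=f_n\circ X\in\mathcal{E}_+$. Applying Lemma \ref{extension_properties} to the monotone sequence $(f_n)$ in $\mathbb{L}^\uparrow$ yields $f\in\mathbb{L}^\uparrow$ and $I(f_n)\uparrow I(f)$ in $\mathcal{E}^s$. Lemma \ref{convergence_criterion} then provides a co-meagre set $M\subseteq K$ on which $I(f_n)(\omega)\uparrow I(f)(\omega)$ pointwise. On the open dense set $\{X<\infty\}$ one has $(f_n\circ X)(\omega)=f_n(X(\omega))\uparrow f(X(\omega))=(f\circ X)(\omega)$, because $X(\omega)$ is finite and $f_n\uparrow f$ on $\mathbb{R}$. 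Consequently $I(f)$ and $f\circ X$ are continuous $\overline{\mathbb{R}}$-valued functions on $K$ that coincide on the dense set $M\cap\{X<\infty\}$, so they coincide everywhere, giving $I(f)=f\circ X$ for every positive $f\in C(\mathbb{R})$.

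For general $f\in C(\mathbb{R})$, write $f=f^+-f^-$ with $f^{\pm}$ positive continuous. The previous paragraph gives $f^{\pm}\in \mathbb{L}^\uparrow_0$ and $I(f^{\pm})=f^{\pm}\circ X\in C^\infty(K)=\mathcal{E}^u$, so $f\in\mathbb{L}_u$ by definition. The extension of $I$ to $\mathbb{L}_u$ then gives $I(f)=I(f^+)-I(f^-)=f^+\circ X-f^-\circ X$. On $\{X<\infty\}$ this difference equals $f(X(\omega))=(f\circ X)(\omega)$ pointwise, so by continuity in $C^\infty(K)$ we conclude $I(f)=f\circ X$.

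The main obstacle is the passage from the order-theoretic supremum $\sup_n I(f_n)$ in $\mathcal{E}^s$ to a genuine pointwise statement on $K$; this is precisely what Lemma \ref{convergence_criterion} provides, and combining it with the density of $\{X<\infty\}$ and the continuity of the functions involved forces the sup to be the continuous extension $f\circ X$.
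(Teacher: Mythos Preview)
Your proof is correct and follows essentially the same route as the paper: approximate a positive $f$ by an increasing sequence of positive continuous compactly supported $f_n$, apply Lemma~\ref{positive_bounded_support} to get $I(f_n)=f_n\circ X$, pass to the limit via Lemmas~\ref{extension_properties} and~\ref{convergence_criterion} together with Baire to obtain $I(f)=f\circ X$, and then decompose a general $f$ as $f^+-f^-$ to land in $\mathbb{L}_u$. The only cosmetic difference is your use of bump functions $\phi_n$ versus the paper's explicit truncation, and you might make explicit that $M\cap\{X<\infty\}$ is dense because $K$ is a Baire space.
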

\begin{proof}
Let $f\in C(\mathbb{R})$ be a positive continuous function. Then we can find a sequence $(f_n)\subset C(\mathbb{R})$ of positive continuous functions with bounded support as follows: $f_n(t)=f(t)$ if $t\in [-n, n]$, and $f_n(t)=0$ if $t\notin [-n-1, n+1]$. Then $f_n \uparrow f$ and combining Lemma \ref{extension_properties} and Lemma \ref{convergence_criterion} gives that $I(f_n)\uparrow I(f)$ pointwise on a co-meagre subset of $K$. However, we also have that $f_n\circ X(\omega)\uparrow f\circ X(\omega)$ where $\omega$ belongs to $\{X<\infty\}$, an open dense set. Since Lemma \ref{positive_bounded_support} gives that $I(f_n)=f_n\circ X$, we can conclude that $I(f)=f\circ X$ on a co-meagre set and thus on a dense set by the Baire category theorem. As $f$ is a positive continuous function, implies that $f\circ X\in\mathcal{E}_+^u$ and therefore $f\circ X\in \mathcal{E}_+^s$. Since $I(f)$ and $f\circ X$ are continuous functions equal on a dense subset of $K$, we have $I(f)=f\circ X$ everywhere on $K$. However, $f\circ X\in \mathcal{E}^u$ implies that $I(f)=f\circ X\in \mathcal{E}^u$. Hence, $C(\mathbb{R})_+\subseteq \mathbb{L}_u$ and thus, $C(\mathbb{R})\subseteq \mathbb{L}_u$. So given $f\in C(\mathbb{R})$, applying the preceding argument to $f^+$ and $f^-$, we get
$$I(f)=I(f^+)-I(f^-)=f^+\circ X-f^-\circ X=f\circ X$$
\end{proof}

The following are some simple corollaries resulting from the above theorem.
\begin{corollary}
Let $X\in \mathcal{E}$ and $f\in C(\mathbb{R})$. Then $I(f) = f(X)\in \mathcal{E}^u$.
\end{corollary}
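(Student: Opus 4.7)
The plan is to note that this corollary is essentially a direct repackaging of Theorem \ref{pointwise_final_theorem} together with a fact already established in its proof, so the work amounts to assembling those pieces cleanly.

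First, I would apply Theorem \ref{pointwise_final_theorem} directly to obtain the equality $I(f) = f\circ X$, which under the notational identification $f(X) := f\circ X$ gives the first half of the conclusion. The only remaining assertion is that this common value lies in $\mathcal{E}^u$.

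For the membership in $\mathcal{E}^u$, I would decompose $f = f^+ - f^-$, noting both $f^+$ and $f^-$ are positive continuous functions on $\mathbb{R}$. The argument in the proof of Theorem \ref{pointwise_final_theorem} already established, as an intermediate step via the approximating sequence $f_n \uparrow f$ of positive continuous functions of bounded support (each with $f_n \circ X \in \mathcal{E}$ by Lemma \ref{positive_bounded_support}), that $f^+ \circ X$ and $f^- \circ X$ each belong to $\mathcal{E}^u_+$. Since $\mathcal{E}^u$ is a vector lattice, $f \circ X = f^+\circ X - f^- \circ X \in \mathcal{E}^u$, which completes the proof.

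There is no real obstacle here; the corollary is essentially a corollary in the most literal sense, and the only thing to be careful about is simply to cite Theorem \ref{pointwise_final_theorem} for the equality and to recall from its proof (rather than re-derive) that positive continuous functions composed with $X$ land in $\mathcal{E}^u_+$.
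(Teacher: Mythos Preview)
Your proposal is correct and matches the paper's approach: the corollary is stated without proof in the paper precisely because the equality $I(f)=f\circ X$ and the membership $f\circ X\in\mathcal{E}^u$ are both already established within the proof of Theorem~\ref{pointwise_final_theorem}, exactly as you observe. Your write-up simply makes explicit the details the paper leaves implicit.
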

\begin{corollary}\label{wedge_vee_composition}
Let $I:\mathbb{L}_u\to \mathcal{E}^u$ and $f,g\in C(\mathbb{R})$. Then $I(f\vee g)=I(f)\vee I(g), I(f\wedge g)=I(f)\wedge I(g)$ and $I(|f|)=|I(f)|$. That is, the restriction of $I$ to $C(\mathbb{R})$ is a lattice homomorphism.
\end{corollary}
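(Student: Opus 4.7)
The plan is to reduce everything to Theorem \ref{pointwise_final_theorem}, since once $I$ on $C(\mathbb{R})$ is identified with composition with $X$, the lattice homomorphism property becomes a tautology about pointwise operations on $C^\infty(K)$.

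First I would note that $f\vee g$, $f\wedge g$, and $\lvert f\rvert$ are again in $C(\mathbb{R})$ whenever $f,g\in C(\mathbb{R})$, so Theorem \ref{pointwise_final_theorem} applies to them and gives $I(f\vee g)=(f\vee g)\circ X$, $I(f\wedge g)=(f\wedge g)\circ X$, and $I(\lvert f\rvert)=\lvert f\rvert\circ X$. Simultaneously, the same theorem yields $I(f)=f\circ X$ and $I(g)=g\circ X$.

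Next I would verify that composition with $X$ respects the lattice operations. On the open dense set $\{X<\infty\}$ the function $(f\vee g)\circ X$ is defined by the genuine pointwise formula $\omega\mapsto (f\vee g)(X(\omega))=\max\{f(X(\omega)),g(X(\omega))\}$, which coincides pointwise with $\max\{(f\circ X)(\omega),(g\circ X)(\omega)\}=(f\circ X)\vee(g\circ X)(\omega)$ (recalling that lattice operations in $C^\infty(K)$ are the pointwise operations up to a nowhere dense set). Two elements of $C^\infty(K)$ that agree on a dense set are equal, so $(f\vee g)\circ X=(f\circ X)\vee(g\circ X)$ in $C^\infty(K)$. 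Combining, $I(f\vee g)=I(f)\vee I(g)$. The identical argument handles $\wedge$ and $\lvert\cdot\rvert$ (or, for the latter, one can simply write $\lvert f\rvert=f\vee(-f)$ and invoke linearity of the extension already established on $\mathbb{L}_u$).

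There is no real obstacle here; the one small point to be careful about is that the equalities $I(h)=h\circ X$ from Theorem \ref{pointwise_final_theorem} hold \emph{everywhere} on $K$ (the theorem upgrades the dense-set equality to all of $K$ by continuity), so no density or meagreness bookkeeping survives into the corollary. The final sentence that $I|_{C(\mathbb{R})}$ is a lattice homomorphism follows because $I$ is already linear on $\mathbb{L}_u\supseteq C(\mathbb{R})$ and the three displayed identities verify preservation of $\vee$, $\wedge$, and modulus, which is precisely the definition of a lattice homomorphism between vector lattices.
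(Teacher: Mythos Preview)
Your proposal is correct and matches the paper's intended approach: the corollary is stated without proof as an immediate consequence of Theorem~\ref{pointwise_final_theorem}, and your reduction to pointwise composition with $X$ on the dense set $\{X<\infty\}$ is exactly the argument being left to the reader.
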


The following proposition is a stronger statement than [Proposition 4.6, \cite{MR3151817}] and [Proposition 2.8, \cite{MR3573115}]. Recall the following convergence criterion for elements of $C^\infty(K)$ from [Theorem 3.7, \cite{MR4366912}] that states that $x_n\goesuo x$ if and only if $x_n$ converges to $x$ pointwise on a co-meagre set. 

\begin{proposition}\label{order_converegence_of_integral}
Let $f:\mathbb{R}\to\mathbb{R}$ be a continuous function. If $x_n\goesuo x$ in $\mathcal{E}$ then $f(x_n)\goesuo f(x)$ in $\mathcal{E}^u$. 
\end{proposition}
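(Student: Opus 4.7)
The plan is to exploit the convergence criterion from Theorem 3.7 of \cite{MR4366912} in both directions: $uo$-convergence in $\mathcal{E}$ (and in $\mathcal{E}^u$, viewed inside $C^\infty(K)$) is equivalent to pointwise convergence on a co-meagre subset of $K$. I will translate the hypothesis into pointwise convergence, push it through $f$ using continuity together with Theorem \ref{pointwise_final_theorem}, and then translate the resulting pointwise convergence back into $uo$-convergence.

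First I would apply the criterion to $x_n \goesuo x$ to obtain a co-meagre set $M_0 \subseteq K$ on which $x_n(\omega) \to x(\omega)$. Since each $x_n$ and $x$ lies in $\mathcal{E} \subseteq C^\infty(K)$, each of the sets $\{\abs{x_n} < \infty\}$ and $\{\abs{x} < \infty\}$ is open and dense. Intersecting $M_0$ with their countable intersection yields a co-meagre set $M$ on which all values are finite real numbers and $x_n(\omega) \to x(\omega)$ in $\mathbb{R}$. Continuity of $f$ then immediately delivers $f(x_n(\omega)) \to f(x(\omega))$ for every $\omega \in M$.

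Next I would invoke Theorem \ref{pointwise_final_theorem}, which identifies $f(x_n)$ and $f(x)$ with the pointwise compositions $f \circ x_n$ and $f \circ x$ in $C^\infty(K)$. On each of the dense open sets $\{\abs{x_n} < \infty\}$ (respectively $\{\abs{x}<\infty\}$) these compositions agree pointwise with the numerical composition, so on the co-meagre set $M$ we have $(f \circ x_n)(\omega) \to (f \circ x)(\omega)$. A second application of the convergence criterion then yields $f(x_n) \goesuo f(x)$ in $\mathcal{E}^u$.

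The only point that requires care — more a verification than a real obstacle — is checking that the criterion of \cite{MR4366912} applies verbatim in $\mathcal{E}^u$ regarded as a subspace of $C^\infty(K)$, so that pointwise convergence on a co-meagre subset of $K$ genuinely captures $uo$-convergence of the compositions; this rests on the fact that $\mathcal{E}^u$ is order dense in $C^\infty(K)$ under the Maeda--Ogasawara representation. Everything else is routine bookkeeping with meagre sets.
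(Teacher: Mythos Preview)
Your proposal is correct and follows essentially the same route as the paper: apply the convergence criterion of \cite{MR4366912} to get pointwise convergence on a co-meagre set, push through $f$ by continuity, and reapply the criterion. The paper's proof is simply a terser version of what you wrote, omitting the bookkeeping about the sets $\{\abs{x_n}<\infty\}$ and the explicit appeal to Theorem~\ref{pointwise_final_theorem}.
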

\begin{proof}
By the convergence criterion, $x_n(\omega)\to x(\omega)$ for every $\omega\in H$ where $H\subseteq K$ is co-meagre set. By continuity of $f$ this implies that $f[x_n(\omega)]\to f[x(\omega)]$ for $\omega\in H$. Therefore, $f(x_n)\goesuo f(x)$.
\end{proof}

\subsection{Multivariate functional calculus}
Since the Daniell functional calculus for continuous functions corresponds to the pointwise composition of functions, we can extend this to the concept of multivariate continuous functions. Given $f\in C(\mathbb{R}^n, \mathbb{R})$ and  $\mathbf{X}=(X_i)_{i=1}^n\subset\mathcal{E}$ where $n\in\mathbb{N}$, let $U$ be the set on which all the $X_i$ are finite. Then $U$ is an open dense set and $f(X_1,\dots,X_n)$ is a well-defined continuous function on $U$. Then denote $f(\mathbf{X})$ to be the unique extension of $f(X_1,\dots,X_n)$ in $C^\infty(K)$. This enables us to prove the multivariate version of Jensen's inequality. The univariate version of the Jensen's inequality in the setting of vector lattices was proved in [Theorem 4.4, \cite{MR3151817}]. Let $\mathbf{X}=(X_1, \dots, X_n)$ and for a given conditional expectation operator $\mathbb{F}$ on $\mathcal{E}$ let $\mathbb{F}\mathbf{X}:=(\mathbb{F}X_1, \dots, \mathbb{F}X_n)$.
\begin{theorem}
Let $f\in C(\mathbb{R}^n, \mathbb{R})$ be a convex function and $\mathbf{X}=(X_i)_{i=1}^n\subset\mathcal{E}$. Let $\mathbb{F}$ be a conditional expectation defined on $\mathcal{E}$. If $f(\mathbf{X}) \in \mathcal{E}$, then $\mathbb{F}(f(\mathbf{X})) \geq f(\mathbb{F}\mathbf{X})$.
\end{theorem}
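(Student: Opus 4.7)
The plan is to transfer the classical proof of Jensen's inequality (via supporting affine functions) to the $C^\infty(K)$ picture, using the fact established in Theorem \ref{pointwise_final_theorem} that continuous composition is pointwise. The key representation I need is that every continuous convex function $f \colon \mathbb{R}^n \to \mathbb{R}$ is the pointwise supremum of a \emph{countable} family of affine minorants $\ell_k(y) = a_k \cdot y + c_k$ with $\ell_k \leq f$; this follows from the existence of subgradients at every point of $\mathbb{R}^n$ (since $f$ is finite and convex) together with a density argument selecting subgradients over a countable dense set of base points.

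With this in hand, I would transport the representation to $K$. On the open dense set $U := \bigcap_{i=1}^{n} \{X_i < \infty\}$, the composition $f(\mathbf{X})$ is pointwise, and $f(\mathbf{X})(\omega) = \sup_k \ell_k(\mathbf{X})(\omega)$ there. Each $\ell_k(\mathbf{X}) = \sum_i a_k^{(i)} X_i + c_k E$ lies in $\mathcal{E}$ and satisfies $\ell_k(\mathbf{X}) \leq f(\mathbf{X})$ on all of $K$ by continuity. Moreover, any common upper bound $g$ of the family $(\ell_k(\mathbf{X}))$ dominates $f(\mathbf{X})$ pointwise on $U$, and hence everywhere on $K$ by continuity; this identifies $f(\mathbf{X})$ with the order-theoretic supremum $\sup_k \ell_k(\mathbf{X})$ in $C^\infty(K)$.

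Now set $Y_m := \bigvee_{k \leq m} \ell_k(\mathbf{X}) \in \mathcal{E}$, so $Y_m \uparrow f(\mathbf{X})$. Since $f(\mathbf{X}) \in \mathcal{E}$ by hypothesis and $\mathbb{F}$ is order continuous, $\mathbb{F}(Y_m) \uparrow \mathbb{F}(f(\mathbf{X}))$. For every $k \leq m$, positivity of $\mathbb{F}$ and linearity together with $\mathbb{F}E = E$ give
\[
\mathbb{F}(Y_m) \geq \mathbb{F}(\ell_k(\mathbf{X})) = \ell_k(\mathbb{F}\mathbf{X}),
\]
so $\mathbb{F}(Y_m) \geq \bigvee_{k \leq m} \ell_k(\mathbb{F}\mathbf{X})$. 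Passing to the limit in $m$ yields $\mathbb{F}(f(\mathbf{X})) \geq \sup_k \ell_k(\mathbb{F}\mathbf{X})$. Applying the same supremum-identification argument to $\mathbb{F}\mathbf{X}$ on the dense set $V := \bigcap_i \{\mathbb{F} X_i < \infty\}$ lets one recognise the right-hand side as $f(\mathbb{F}\mathbf{X})$, which completes the proof.

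The main obstacle is the multivariate affine-minorant representation: in dimension one this reduces to tangent lines at rational points and is routine, but for $n > 1$ one must invoke the nonemptiness of the subdifferential of a finite convex function on $\mathbb{R}^n$ at every point, and then argue by continuity and density that a countable subfamily already realises the pointwise supremum. Once this is secured, the remaining work — turning a countable supremum into an increasing net, passing $\mathbb{F}$ through it, and identifying order-suprema with pointwise suprema on a dense open set — is straightforward from the tools assembled earlier in the section.
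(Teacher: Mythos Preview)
Your proposal is correct and follows essentially the same route as the paper: represent $f$ as a countable pointwise supremum of affine minorants $\ell_k$, push $\mathbb{F}$ past each $\ell_k$ by linearity and $\mathbb{F}E=E$, take finite joins, and pass to the supremum to recover $f(\mathbb{F}\mathbf{X})$. The only cosmetic differences are that the paper bounds $\mathbb{F}(f(\mathbf{X}))\ge L_m(\mathbb{F}\mathbf{X})$ directly (so it never needs the order continuity of $\mathbb{F}$, which you invoke via $Y_m\uparrow f(\mathbf{X})$), and it identifies $\sup_m L_m'(\mathbb{F}\mathbf{X})=f(\mathbb{F}\mathbf{X})$ by citing Lemma~\ref{extension_properties} and Corollary~\ref{wedge_vee_composition} rather than by your explicit dense-set argument.
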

\begin{proof}
Since $f$ is a convex function, it is a fact from analysis that there exists a sequence of affine functions $L_m:\mathbb{R}^n\to\mathbb{R}$ of the form $L_m(t)=\langle a_m,t\rangle+b_m$ for some $a_m\in\mathbb{R}^n, b_m\in\mathbb{R}$ such that for every $t\in\mathbb{R}^n$, we have
$$f(t)=\sup_{m\in\mathbb{N}} L_m(t)$$
Since $f\geq L_m$, we have $f(\mathbf{X})\geq L_m(\mathbf{X})$ for every $m$. Since $\mathbb{F}$ is a positive linear projection and $f(\mathbf{X})\in\mathcal{E}$, we have
\begin{equation}\label{jensen_midway}
\mathbb{F}(f(\mathbf{X}))\geq \mathbb{F}(L_m(\mathbf{X}))=L_m(\mathbb{F}\mathbf{X}), \forall m\in\mathbb{N}
\end{equation}
For $m\in\mathbb{N}$, let $L_m^\prime=L_1\vee\dots\vee L_m$. Then $L_m^\prime$ are an increasing sequence of continuous functions such that $f(t)=\sup_{m\in\mathbb{N}}L_m^\prime(t)$ and thus $L_m^\prime(\mathbb{F}\mathbf{X})\uparrow f(\mathbb{F}\mathbf{X})$ by Theorem \ref{extension_properties}. By Corollary \ref{wedge_vee_composition} and \ref{jensen_midway}, we have
$$L_m^\prime(\mathbb{F}\mathbf{X})= \bigvee_{i=1}^m L_i(\mathbb{F}\mathbf{X})\leq \mathbb{F}(f(\mathbf{X}))$$
Thus, it follows that $f(\mathbb{F}\mathbf{X})\leq \mathbb{F}(f(\mathbf{X}))$.
\end{proof}

\begin{corollary}
Let $(X_{t}^{(i)}, \mathbb{F}_t, \mathcal{F}_t)_{i=1}^n$ be a finite collection of martingales with the filtration $(\mathbb{F}_t)_{t\in\mathbb{N}}$ and let $g\in C(\mathbb{R}^n,\mathbb{R})$ be a convex function. If $g(\mathbf{X}_t)=g(X_{t}^{(1)},\dots,X_{t}^{(n)})\in\mathcal{E}$ for all $t$, then $(g(\mathbf{X}_t),\mathbb{F}_t,\mathcal{F}_t)$ is a sub-martingale. 
\end{corollary}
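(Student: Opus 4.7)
The plan is to apply the multivariate Jensen inequality (the theorem just proved) directly, with the conditional expectation $\mathbb{F}_t$ and $t \leq s$. By hypothesis $g(\mathbf{X}_s) \in \mathcal{E}$, so the theorem yields
$$\mathbb{F}_t\bigl(g(\mathbf{X}_s)\bigr) \geq g(\mathbb{F}_t \mathbf{X}_s).$$
The martingale property applied coordinatewise gives $\mathbb{F}_t X_s^{(i)} = X_t^{(i)}$ for each $i$, hence $\mathbb{F}_t \mathbf{X}_s = \mathbf{X}_t$ componentwise, and therefore $g(\mathbb{F}_t \mathbf{X}_s) = g(\mathbf{X}_t)$. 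Combining these gives the sub-martingale inequality $\mathbb{F}_t\bigl(g(\mathbf{X}_s)\bigr) \geq g(\mathbf{X}_t)$.

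What remains is adaptedness: $g(\mathbf{X}_t) \in \mathcal{F}_t$ for each $t$. Since each coordinate $X_t^{(i)}$ belongs to $\mathcal{F}_t$ and $\mathcal{F}_t$ is an order complete regular sublattice of $\mathcal{E}$ containing the weak unit $E$, it inherits its own Maeda--Ogasawara representation on the Stone space $K_t$ of $\mathcal{F}_t$, and the inclusion $\mathcal{F}_t \hookrightarrow C^\infty(K)$ arises from pullback along a continuous surjection $K \to K_t$. Viewing each $X_t^{(i)}$ as the pullback of $\widetilde{X}_t^{(i)} \in C^\infty(K_t)$, the multivariate functional calculus $g(\mathbf{X}_t)$ --- being the unique continuous extension of pointwise composition on the dense open set where all coordinates are finite --- coincides with the pullback of $g(\widetilde{\mathbf{X}}_t)$. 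Since by hypothesis $g(\mathbf{X}_t)$ already lies in $\mathcal{E}$, this pullback lands in the image of $\mathcal{F}_t$, as required.

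I expect the main obstacle to be precisely this adaptedness step: the Jensen-based inequality itself is immediate once the multivariate Jensen theorem is in hand, but verifying that $g(\mathbf{X}_t) \in \mathcal{F}_t$ rather than merely $g(\mathbf{X}_t) \in \mathcal{E}$ requires the compatibility of the multivariate continuous functional calculus with the quotient of Stone spaces induced by the inclusion $\mathcal{F}_t \hookrightarrow \mathcal{E}$. An alternative route, avoiding the Stone-space comparison, is to approximate $g$ locally by polynomials on compact neighbourhoods in $\mathbb{R}^n$, exploit that $\mathcal{F}_t$ is an $f$-subalgebra under the $C^\infty(K)$-multiplication, and then pass to a relatively uniform limit using order closedness of $\mathcal{F}_t$; but the quotient-map argument is the cleaner of the two.
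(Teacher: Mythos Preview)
Your core argument --- apply the multivariate Jensen inequality with $\mathbb{F}_t$ to get $\mathbb{F}_t\bigl(g(\mathbf{X}_s)\bigr)\ge g(\mathbb{F}_t\mathbf{X}_s)=g(\mathbf{X}_t)$ --- is exactly the paper's proof, which consists of that single line and nothing more. Your additional discussion of adaptedness ($g(\mathbf{X}_t)\in\mathcal{F}_t$) goes beyond what the paper verifies; the paper's proof simply does not address it, so in that respect you are filling a gap the author left implicit rather than diverging from the intended route.
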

\begin{proof}
It follows from Jensen's inequality that for $t<s$, we have
$$\mathbb{F}_t[g(\mathbf{X}_s)] \geq g[\mathbb{F}_t(\mathbf{X}_s)]=g(\mathbf{X}_t)$$
and thus $(g(\mathbf{X}_t),\mathbb{F}_t,\mathcal{F}_t)$ is a sub-martingale. 
\end{proof}

\section{Discrete stopping times}\label{discrete_stopping_times}
In the classical measure setting, given a filtered probability space $(\Omega, \mathcal{F}, (\mathcal{F}_{t})_{t\in T}, P)$, $(X_t)_{t\in T}$ an adapted stochastic process and $\tau$ a stopping time, we define the stopped process as $X_t^\tau(\omega) = X_{t\wedge\tau(\omega)}(\omega)$. The notion for discrete stopping times has been generalized to the theory of vector lattices by Kuo, Labuschagne and Watson in \cite{MR2093170} and further discussed in \cite{MR2118085}. Below we state the definitions for the stopping time and stopped processes in vector lattices. We will only consider the case of discrete filtrations and stochastic processes.

\begin{definition}
Let $(\mathbb{F}_i)_{i\in\mathbb{N}}$ be a filtration on $\mathcal{E}$. A stopping time $(P_i)_{i\in\mathbb{N}}$ is defined to be an increasing sequence of band projections such that $P_0=0$ and $\mathbb{F}_jP_i = P_i\mathbb{F}_j$ whenever $i\leq j$. 
\end{definition}
In particular, each $P_i$ is order continuous, $0\leq P_i\leq I$ and $\mathcal{R}(P_i)$ is a band in $\mathcal{E}$, hence an order complete sublattice. Note that $P_n E\in\mathcal{F}_n$, because $P_n E=P_n\mathbb{F}_n E=\mathbb{F}_n P_n E$.

The stopping time $(P_i)$ is said to be bounded if there exists $N$ so that $P_i = I$ for all $i\geq N$. For a bounded stopping time $P=(P_i)$ and an adapted stochastic process $(X_i, \mathbb{F}_i)$ define the stopped element as $X_P$ where $X_P := \sum_{i=1}^\infty (P_i - P_{i-1})X_i$. 

\begin{lemma}\label{lateral_sum}
Let $(U_n)_{n\in\mathbb{N}}$ be a sequence of pairwise disjoint clopen sets in $K$, and set $\tau = \sup_{n\in\mathbb{N}} n\mathbbm{1}_{U_n}$. Then $\tau\in C^\infty(K)_+$ with $\{\tau=n\}=U_n, \forall n\in\mathbb{N}$ and $\mathcal{R}(\tau)\subseteq\mathbb{N}\cup\{0\}\cup\{\infty\}$.
\end{lemma}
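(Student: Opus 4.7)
The plan is to realize $\tau$ as the supremum in $\mathcal{E}^s$ of the (bounded) functions $n\mathbbm{1}_{U_n}\in\mathcal{E}$; by the representation of $\mathcal{E}^s$ recalled in the preliminaries, this supremum lives in $C(K,\overline{\mathbb{R}})$, so the task reduces to computing its value at each point of $K$. Set $V=\bigcup_{n\in\mathbb{N}} U_n$, an open subset of $K$, and $W=K\setminus\overline{V}$, again open, so that $V\cup W$ is an open dense subset of $K$. On this dense set the pointwise supremum of the $n\mathbbm{1}_{U_n}$ is already continuous: it equals $n$ on each clopen piece $U_n$ and equals $0$ on $W$. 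Since a continuous function on $K$ is determined by its values on a dense set, $\tau$ must agree with this pointwise supremum on $V\cup W$, and everything reduces to evaluating $\tau$ on the boundary $\partial V=\overline{V}\setminus V$.

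The main step is to show $\tau(\omega)=\infty$ for every $\omega\in\partial V$. Here the hypothesis that each $U_n$ is clopen (not merely open) is crucial: since $U_n=\overline{U_n}$, we get $\omega\notin U_n$ for all $n$, and hence for each $N\in\mathbb{N}$ the finite union $\bigcup_{n\leq N} U_n$ is a closed set missing $\omega$, making its complement an open neighborhood of $\omega$. Intersecting this complement with an arbitrary neighborhood of $\omega$ yields a smaller neighborhood of $\omega$ which must meet $V$ (because $\omega\in\overline{V}$), and any such meeting point necessarily lies in $\bigcup_{n>N} U_n$, where $\tau$ takes a value exceeding $N$. Thus every neighborhood of $\omega$ contains points at which $\tau$ is arbitrarily large, so a standard net argument together with continuity of $\tau$ forces $\tau(\omega)=\infty$.

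It remains to observe that $\partial V$ is nowhere dense: any open subset $\mathcal{O}\subseteq\overline{V}\setminus V$ would simultaneously be disjoint from $V$ and, since each point of $\mathcal{O}$ lies in $\overline{V}$, have to meet $V$, a contradiction. Together with $\tau\geq 0$ and the continuity of $\tau$, this gives $\{\tau=\infty\}=\partial V$ closed with empty interior, hence $\tau\in C^\infty(K)_+$. The partition $K=V\sqcup W\sqcup\partial V$ and the values of $\tau$ on each piece then immediately produce $\{\tau=n\}=U_n$ for every $n\in\mathbb{N}$ and $\mathcal{R}(\tau)\subseteq\mathbb{N}\cup\{0\}\cup\{\infty\}$. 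The only genuinely non-trivial point is the identification $\tau|_{\partial V}\equiv\infty$, which would collapse if the $U_n$ were assumed only open.
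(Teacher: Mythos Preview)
Your overall strategy is sound and runs parallel to the paper's: compute $\tau$ on the open dense set $V\cup W$, then on the boundary $\partial V$, and read off the conclusions. Your boundary argument is correct and is essentially a neighbourhood version of the paper's sequential argument; note that it only uses the inequality $\tau\ge n$ on $U_n$ (immediate from $\tau\ge n\mathbbm{1}_{U_n}$), so it is independent of what happens on $V\cup W$.

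There is, however, a genuine gap in the sentence ``Since a continuous function on $K$ is determined by its values on a dense set, $\tau$ must agree with this pointwise supremum on $V\cup W$.'' As written this is a non sequitur: knowing that $\tau\in C(K,\overline{\mathbb{R}})$ and that the pointwise supremum $h$ is continuous on $V\cup W$ gives only $\tau\ge h$ there (since $\tau$ is an upper bound), not equality. Determinacy on a dense set tells you a continuous function is \emph{unique} once its values are known there; it does not tell you \emph{what} those values are. To force $\tau\le h$ on $V\cup W$ you must actually produce upper bounds: for instance $k\mathbbm{1}_{U_k}+\infty\mathbbm{1}_{U_k^c}$ (continuous because $U_k$ is clopen) shows $\tau\le k$ on $U_k$, and $\infty\mathbbm{1}_{\overline{V}}$ shows $\tau=0$ on $W$. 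The paper handles this step differently, via the band-projection identity $\mathbbm{1}_{U_k}\tau=\sup_n n\mathbbm{1}_{U_k\cap U_n}=k\mathbbm{1}_{U_k}$ (order continuity of band projections) followed by a short contradiction argument. Either route works, but some argument is required: the final identifications $\{\tau=n\}=U_n$ and $\{\tau=\infty\}=\partial V$ genuinely depend on it (e.g.\ one needs $\tau=m<n$ on $U_m$ for $m<n$ to exclude those points from $\{\tau=n\}$).
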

\begin{proof}
Since $(n\mathbbm{1}_{U_n})_{n\in\mathbb{N}}$ are pairwise disjoint functions in $C^\infty(K)_+$, we have that $\tau\in C^\infty(K)$. Since band projections are order continuous, $$\mathbbm{1}_{U_k}\tau=\mathbbm{1}_{U_k}(\sup_n n\mathbbm{1}_{U_n})=\sup_n n\mathbbm{1}_{U_n\cap U_k}=k\mathbbm{1}_{U_k}.$$
Hence, $U_k\subseteq \{\tau=k\}^\circ$ for all $k\in\mathbb{N}$. But let us suppose that $\exists n\in\mathbb{N},\exists \alpha\in V:=\{\tau=n\}^\circ$ such that $\alpha\notin U_n$. Then consider the function
$$T=\tau-\mathbbm{1}_{V\setminus U_n}$$
Clearly, $\tau=T$ on $(V\setminus U_n)^c$ and $T(V\setminus U_n)=n-1$ and thus $T<\tau$. However, $\mathbbm{1}_{U_k}T=\mathbbm{1}_{U_k}\tau-\mathbbm{1}_{(V\setminus U_n) \cap U_k}=k\mathbbm{1}_{U_k}-\mathbbm{1}_{(V\setminus U_n)\cap U_k}.$ Since $V\cap U_k=\emptyset$, we have $\mathbbm{1}_{(V\setminus U_n)\cap U_k}=0$ and thus $T\geq k\mathbbm{1}_{U_k}, \forall k\in\mathbb{N}$. But this is a contradiction, and thus, we have $\{\tau=n\}^\circ= U_n$.\\

The support of $\tau$ is $\overline{\bigcup_{i=1}^\infty U_i}$ and $\tau(U_i)\in\mathbb{N},\forall i\in\mathbb{N}$. Let $\omega\in \partial(\bigcup_{i=1}^\infty U_i)$. Then there exists a sequence $(\omega_n)\subset \bigcup_{i=1}^\infty U_i$ such that $\omega_n\to \omega$ and the tail of the sequence does not belong to any $U_i$. Therefore, WLOG, we can assume that $\omega_n\in \bigcup_{i=n}^\infty U_i$ and thus $\tau(\omega_n)\geq n$. Therefore, $\tau(\omega)=\infty$ and hence $\mathcal{R}(\tau) \subseteq \mathbb{N}\cup\{0\}\cup\{\infty\}$. However, since the range of $\tau$ is discrete, for $n\in\mathbb{N}$ we have $\{\tau=n\}=\{\tau<n-\frac{1}{2}\}\cup\{\tau>n+\frac{1}{2}\}$. Hence, $\{\tau=n\}$ is an open set and we have $\{\tau=n\}= U_n$.
\end{proof}

Maeda-Ogasawara theorem allows to represent the stopping time in terms of continuous functions on the Stone space of $\mathcal{E}$. Since the $(P_n)$ are band projections, each of the $P_n$ correspond to multiplication by a function of the form $\mathbbm{1}_{W_n}$ where $W_n$ is a clopen set, with the $(W_n)$ being an increasing sequence of clopen sets. Let $P_n^\prime = P_n - P_{n-1}$ for $n\geq 1$ and $P_0^\prime = 0$. Then each of the $P_n^\prime$ remains a band projection and corresponds to multiplication by a function of the form $\mathbbm{1}_{U_n}$ where $U_n:=W_n\setminus W_{n-1}$ are pairwise disjoint clopen sets. Let $V = K\setminus (\overline{\bigcup_{n=1}^\infty U_n})$ and set $\tau = \infty\mathbbm{1}_V+\sup_n n\mathbbm{1}_{U_n}$. Lemma \ref{lateral_sum} gives that $\sup_{n\in\mathbb{N}} n\mathbbm{1}_{U_n}\in C^\infty(K)$ and thus $\tau\in\mathcal{E}^s$ with $\mathcal{R}(\tau)\subseteq\mathbb{N}\cup\{\infty\}$. Furthermore, for $n\in\mathbb{N}$, we have $\{\tau=n\}=U_n$ is a clopen set and hence
$$\mathbbm{1}_{\{\tau=n\}}=P_n^\prime E=P_n E - P_{n-1}E \in \mathcal{F}_n$$
Since $\mathbbm{1}_{\{\tau=n\}}\in \mathcal{F}_n$ we also have $\mathbbm{1}_{\{\tau\leq n\}}\in \mathcal{F}_n$.\\

In fact, the converse is true as well. That is, if $\tau\in\mathcal{E}^s$ satisfying $\mathcal{R}(\tau) \subseteq \mathbb{N}\cup\infty$ and $\mathbbm{1}_{\{\tau=n\}}\in \mathcal{F}_n$ for all $n\in\mathbb{N}$, then $\tau$ corresponds to a stopping time. To see this, define the operator $P_n:\mathcal{E}\to\mathcal{E}$ via $P_n(f) = f\cdot\mathbbm{1}_{\{\tau\leq n\}}$ for $f\in \mathcal{E}$. Clearly, $(P_n)_{n\in\mathbb{N}}$ is an increasing sequence of band projections such that $P_0=0$. Then upon using the averaging property of conditional expectation operators, we obtain that
$$\mathbb{F}_n P_n(f) = \mathbb{F}_n f\cdot \mathbbm{1}_{\{\tau\leq n\}}=\mathbbm{1}_{\{\tau\leq n\}}\cdot \mathbb{F}_n(f) = P_n\mathbb{F}_n(f)$$
Hence, $\tau$ corresponds to a stopping time. We summarize the above in the theorem below. 
\begin{theorem}\label{stopping_time_representation}
Every stopping time corresponds to an element $\tau\in \mathcal{E}^s$ that satisfies $\mathcal{R}(\tau) \subseteq \mathbb{N}\cup\{\infty\}$ and $\mathbbm{1}_{\{\tau=n\}}\in \mathcal{F}_n$ for all $n\in\mathbb{N}$. The converse is also true.
\end{theorem}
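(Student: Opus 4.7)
The plan is to establish the two implications separately, using the Maeda-Ogasawara dictionary that identifies band projections on $\mathcal{E}$ with multiplication by indicators of clopen subsets of the Stone space $K$.

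For the forward direction, I would start with a stopping time $(P_n)$ and represent each $P_n$ as multiplication by $\mathbbm{1}_{W_n}$ for a clopen set $W_n\subseteq K$; the monotonicity $P_{n-1}\leq P_n$ forces $W_{n-1}\subseteq W_n$. Setting $U_n := W_n\setminus W_{n-1}$ and $V := K\setminus\overline{\bigcup_n U_n}$, I would define
\[
\tau := \infty\mathbbm{1}_V + \sup_n n\mathbbm{1}_{U_n}.
\]
Lemma \ref{lateral_sum}, applied to the pairwise disjoint sequence $(U_n)$, yields at once that $\sup_n n\mathbbm{1}_{U_n}\in C^\infty(K)_+$ and $\{\sup_n n\mathbbm{1}_{U_n} = n\} = U_n$; combining this with the infinite contribution on $V$ gives $\tau\in\mathcal{E}^s$ (being a nonnegative function in $C(K,\overline{\mathbb{R}})$, it dominates $0\in\mathcal{E}$), with range contained in $\mathbb{N}\cup\{\infty\}$ and with $\{\tau = n\}=U_n$. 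The identity $\mathbbm{1}_{\{\tau = n\}}=\mathbbm{1}_{W_n}-\mathbbm{1}_{W_{n-1}}=P_nE-P_{n-1}E$ is then automatic, and membership in $\mathcal{F}_n$ follows from the defining commutation relation of a stopping time: $P_nE=\mathbb{F}_n P_n E\in\mathcal{F}_n$, while $P_{n-1}E = \mathbb{F}_{n-1}P_{n-1}E\in\mathcal{F}_{n-1}\subseteq\mathcal{F}_n$.

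For the converse, I would assume $\tau\in\mathcal{E}^s$ has range in $\mathbb{N}\cup\{\infty\}$ and $\mathbbm{1}_{\{\tau=n\}}\in\mathcal{F}_n$ for all $n$, and define
\[
P_n f := f\cdot\mathbbm{1}_{\{\tau\leq n\}},\qquad f\in\mathcal{E}.
\]
Since $\{\tau\leq n\}=\bigcup_{k=1}^n \{\tau=k\}$ is a finite union of clopen sets (each $\{\tau=k\}$ being clopen as the support of an indicator in $\mathcal{F}_k$), $P_n$ is a band projection on $\mathcal{E}$; the monotonicity of $(P_n)$ and $P_0=0$ are immediate. The only substantive step is verifying $\mathbb{F}_j P_i = P_i \mathbb{F}_j$ for $i\leq j$: because $\mathbbm{1}_{\{\tau\leq i\}}=\sum_{k=1}^i \mathbbm{1}_{\{\tau=k\}}$ and each summand lies in $\mathcal{F}_k\subseteq\mathcal{F}_j$, the factor $\mathbbm{1}_{\{\tau\leq i\}}$ itself belongs to $\mathcal{F}_j$, so the averaging property of $\mathbb{F}_j$ recalled in the introduction yields $\mathbb{F}_j(f\cdot\mathbbm{1}_{\{\tau\leq i\}})=\mathbbm{1}_{\{\tau\leq i\}}\cdot\mathbb{F}_j(f)=P_i\mathbb{F}_j(f)$.

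I do not anticipate a serious obstacle: once Lemma \ref{lateral_sum} is invoked, the forward direction reduces to bookkeeping with clopen sets, and the converse is a single application of the averaging property. The only step requiring a moment's care is confirming in the forward direction that $\{\tau=n\}$ genuinely equals $U_n$ (and is not merely contained in its closure), but this is exactly the content of Lemma \ref{lateral_sum}.
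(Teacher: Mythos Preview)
Your proposal is correct and follows essentially the same route as the paper: the forward direction constructs $\tau$ from the clopen sets $U_n=W_n\setminus W_{n-1}$ via Lemma~\ref{lateral_sum} exactly as done there, and the converse defines $P_n$ as multiplication by $\mathbbm{1}_{\{\tau\le n\}}$ and invokes the averaging property. Your converse is in fact slightly more careful than the paper's exposition, which only checks the commutation $\mathbb{F}_n P_n = P_n\mathbb{F}_n$ at level $n$, whereas you verify the full condition $\mathbb{F}_j P_i = P_i\mathbb{F}_j$ for $i\le j$ required by the definition; one minor quibble is that $\{\tau=k\}$ is clopen because $\tau$ is continuous with discrete range, not because its indicator lies in $\mathcal{F}_k$.
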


Representing the stopping times as above is useful in proving their various properties. A variant of Lemma \ref{limit_stopping_time} given below has been proven in \cite{MR2861602} by Grobler in the case of continuous processes (i.e., when the index is $[0,\infty)$). However, the properties regarding the discrete stopping times can not be deduced directly from there. Hence, we explicitly derive the result using the representation of stopping times and stopped processes from Theorem \ref{stopping_time_representation}. 

\begin{lemma}
\label{limit_stopping_time}
The set of stopping times is closed under the following operations.
\begin{itemize}
    \item If $\sigma,\tau$ are stopping times then so are $\sigma\vee\tau,\sigma\wedge\tau$ and $\sigma+\tau$.
    \item If $(\tau_n)_{n\in\mathbb{N}}$ is a sequence of stopping times, then $\inf\tau_n$ and $\sup\tau_n$ are stopping times. This includes the case where $\tau_n$ is increasing (or decreasing) to the limit $\tau$.
\end{itemize}
\end{lemma}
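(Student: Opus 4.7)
The plan is to apply Theorem~\ref{stopping_time_representation}, verifying for each operation that the result lies in $\mathcal{E}^s$ with range in $\mathbb{N}\cup\{\infty\}$ and satisfies $\mathbbm{1}_{\{\cdot=n\}}\in\mathcal{F}_n$ for every $n$. Throughout I will work with the equivalent measurability condition $\mathbbm{1}_{\{\cdot\le n\}}\in\mathcal{F}_n$, valid because $\mathbbm{1}_{\{\tau\le n\}}=\sum_{k=0}^{n}\mathbbm{1}_{\{\tau=k\}}$ and $\mathcal{F}_k\subseteq\mathcal{F}_n$ for $k\le n$. The key algebraic observation I will use repeatedly is that $\mathcal{F}_n$ is closed under products of indicator functions: if $\mathbbm{1}_A,\mathbbm{1}_B\in\mathcal{F}_n$, then by the averaging property, $\mathbb{F}_n(\mathbbm{1}_A\mathbbm{1}_B)=\mathbbm{1}_A\mathbb{F}_n(\mathbbm{1}_B)=\mathbbm{1}_A\mathbbm{1}_B$, placing the product back in $\mathcal{F}_n$.

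For the binary operations, the range condition is immediate because $\vee$, $\wedge$, and $+$ preserve values in $\mathbb{N}\cup\{\infty\}$ pointwise. Measurability follows from the set-theoretic identities
\[
\{\sigma\vee\tau\le n\}=\{\sigma\le n\}\cap\{\tau\le n\}, \quad \{\sigma\wedge\tau\le n\}=\{\sigma\le n\}\cup\{\tau\le n\},
\]
and $\{\sigma+\tau=n\}=\bigsqcup_{k=0}^{n}\{\sigma=k\}\cap\{\tau=n-k\}$. The first gives $\mathbbm{1}_{\{\sigma\vee\tau\le n\}}$ as a product of two indicators in $\mathcal{F}_n$; the second as their lattice supremum; and the third writes $\mathbbm{1}_{\{\sigma+\tau=n\}}$ as a finite sum of products of indicators drawn from $\mathcal{F}_k\subseteq\mathcal{F}_n$ and $\mathcal{F}_{n-k}\subseteq\mathcal{F}_n$.

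For the countable supremum, I would introduce the partial suprema $\sigma_N=\bigvee_{n=1}^{N}\tau_n$. Each $\sigma_N$ is a stopping time by the binary case, and $\sigma_N\uparrow\sup_n\tau_n$ in $\mathcal{E}^s$. Lemma~\ref{convergence_criterion} produces a co-meagre set $M\subseteq K$ on which $\sigma_N(\omega)\uparrow\sup_n\tau_n(\omega)$ pointwise, so on $M$ the values of $\sup_n\tau_n$ are suprema of sequences in $\mathbb{N}\cup\{\infty\}$, hence in $\mathbb{N}\cup\{\infty\}$. Since $K$ is Baire, $M$ is dense; $\mathbb{N}\cup\{\infty\}$ is closed in $[0,\infty]$; and $\sup_n\tau_n$ is continuous, so the range condition extends to all of $K$. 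For measurability, $\mathbbm{1}_{\{\sup_n\tau_n\le k\}}$ agrees on $M$ with $\inf_n\mathbbm{1}_{\{\tau_n\le k\}}$, hence by density and continuity everywhere, and this infimum lies in $\mathcal{F}_k$ by order completeness of $\mathcal{F}_k$. The case of $\inf_n\tau_n$ is handled dually, using the decreasing partial infima and the analogous downward version of the convergence criterion.

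The main obstacle will be the range condition in the countable cases: the pointwise supremum of continuous functions is not in general continuous, so I cannot argue pointwise directly. Instead, I will combine the monotone approximation in $\mathcal{E}^s$, Lemma~\ref{convergence_criterion}, and the closedness of $\mathbb{N}\cup\{\infty\}$ in $[0,\infty]$ to transfer the discrete-range property from a co-meagre subset to all of $K$ via continuity. Once that point is handled, the measurability conditions reduce to routine applications of order completeness and the product closure of $\mathcal{F}_n$.
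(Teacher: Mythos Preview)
Your approach is essentially the same as the paper's: use Theorem~\ref{stopping_time_representation}, handle the binary operations by the obvious set identities, and for $\sup_n\tau_n$ pass to increasing partial suprema and invoke Lemma~\ref{convergence_criterion} to get pointwise convergence on a co-meagre set. There are, however, two points worth flagging.

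First, your argument for the range condition is cleaner than the paper's. The paper argues by contradiction: it supposes $\tau(\omega)=\alpha\notin\mathbb{N}\cup\{\infty\}$, finds a clopen neighbourhood $V$ with $\tau(V)\subseteq(\lfloor\alpha\rfloor,\lfloor\alpha\rfloor+1)$, and manufactures a strictly smaller upper bound $\tau\mathbbm{1}_{V^c}+\lfloor\alpha\rfloor\mathbbm{1}_V$. Your observation that $\mathbb{N}\cup\{\infty\}$ is closed in $[0,\infty]$, combined with continuity of $\tau$ and density of the co-meagre set, does the same job more transparently.

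Second, there is a small gap in your measurability step. You assert that $\mathbbm{1}_{\{\tau\le k\}}$ agrees on $M$ with $\inf_n\mathbbm{1}_{\{\tau_n\le k\}}$, but on $M$ what you actually know is that $\mathbbm{1}_{\{\tau\le k\}}$ equals the \emph{pointwise} infimum $\bigwedge_n\mathbbm{1}_{\{\tau_n\le k\}}(\omega)$, whereas $\inf_n\mathbbm{1}_{\{\tau_n\le k\}}$ denotes the \emph{order} infimum in $\mathcal{E}$, and these need not coincide on $M$ specifically. You need a second co-meagre set on which the order infimum of $(\mathbbm{1}_{\{\tau_n\le k\}})_n$ equals the pointwise infimum; then intersect with $M$ and finish as you indicate. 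The paper handles this by proving directly that $\inf_n\mathbbm{1}_{\{\tau_n\le k\}}=\mathbbm{1}_{(\bigcap_n\{\tau_n\le k\})^\circ}$: it shows the order infimum is again a $\{0,1\}$-valued component and compares supports. Either route closes the gap, but you should make one of them explicit.
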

\begin{proof}
If $\sigma,\tau$ are stopping times then $\{\sigma\vee\tau\leq n\}=\{\sigma\leq n\}\cap\{\tau\leq n\}$ and $ \{\sigma\wedge\tau\leq n\}=\{\sigma\leq n\}\cup\{\tau\leq n\}$ and thus we have 
$$\mathbbm{1}_{\{\sigma\vee\tau\leq n\}} = \mathbbm{1}_{\{\sigma\leq n\}}\wedge\mathbbm{1}_{\{\tau\leq n\}}\in\mathcal{F}_n.$$
$$\mathbbm{1}_{\{\sigma\wedge\tau\leq n\}} = \mathbbm{1}_{\{\sigma\leq n\}}\vee\mathbbm{1}_{\{\tau\leq n\}}\in\mathcal{F}_n.$$
Since, $\sigma\vee\tau,\sigma\wedge\tau\in\mathcal{E}^s$ with $\mathcal{R}(\sigma\vee\tau),\mathcal{R}(\sigma\wedge\tau)\subseteq \mathbb{N}\cup\{\infty\}$, we have that $\sigma\vee\tau$ and $\sigma\wedge\tau$ are stopping times. Also $\{\sigma+\tau\leq n\}=\bigcup_{0\leq s\leq n}\{\sigma=s\}\cap\{\tau=n-s\}$ and $\mathbbm{1}_{\{\sigma=s\}}, \mathbbm{1}_{\{\tau=n-s\}}\in\mathcal{F}_n$. Hence $\mathbbm{1}_{\{\sigma+\tau\leq n\}}\in \mathcal{F}_n$ and, $\sigma+\tau$ is a stopping time.\\

Let us denote $\sup\tau_n=\tau$. WLOG, $(\tau_n)$ is increasing; otherwise, replace $\tau_n$ with $\tau_1\vee\dots\vee\tau_n$. We first observe that since $\tau_n\in\mathcal{E}_+^s$, $\tau$ is well defined and contained in $\mathcal{E}_+^s$. We claim that $\mathcal{R}(\tau)\subseteq \mathbb{N}\cup\{\infty\}$. Suppose not. Then there exists a point $\omega\in K$ such that $\tau(\omega)=\alpha\notin\mathbb{N}\cup\{\infty\}$ and hence there exists a clopen set $V\subseteq K$ such that $\tau(V)\subseteq (\lfloor\alpha\rfloor, \lfloor\alpha\rfloor+1)$. Let $\sigma = \tau\cdot\mathbbm{1}_{V^c}+\lfloor\alpha\rfloor\mathbbm{1}_V$. Clearly, $\sigma\geq \tau_n$ for every $n\in\mathbb{N}$, which is a contradiction. Therefore, $\mathcal{R}(\tau)\subseteq \mathbb{N}\cup\{\infty\}$ and since $\tau$ is a continuous function taking discrete values, $\{\tau\leq k\}$ is a clopen set for $k\in\mathbb{N}$. By Lemma \ref{convergence_criterion}, there exists a co-meagre set $M\subseteq K$ such that $\tau_n(\omega)\uparrow \tau(\omega)$ for every $\omega\in M$. Then for a fixed $k\in\mathbb{N}$, we have 
$$\{\tau\leq k\}\cap M = \bigcap_n\{\tau_n\leq k\}\cap M$$
which implies that $\mathbbm{1}_{\{\tau\leq k\}}=\mathbbm{1}_{\bigl(\bigcap_n\{\tau_n\leq k\}\bigr)^\circ}$ on a co-meagre set. \\

We claim that $\mathbbm{1}_{\bigl(\bigcap_n\{\tau_n\leq k\}\bigr)^\circ}=\inf_n\mathbbm{1}_{\{\tau_n\leq k\}}$ in $\mathcal{E}$. Since, $\bigcap_n\{\tau_n\leq k\}\subseteq \{\tau_m\leq k\}$ for every $m$, we have $\mathbbm{1}_{\bigl(\bigcap_n\{\tau_n\leq k\}\bigr)^\circ}\leq \inf_n\mathbbm{1}_{\{\tau_n\leq k\}}$. On the other hand, let $\sigma:=\inf_n\mathbbm{1}_{\{\tau_n\leq k\}}$. Using a similar argument as above, we can show that $\mathcal{R}(\sigma)\subseteq \{0\}\cup\{1\}$ and therefore, $V:=\supp\sigma$ is a clopen set. Then $V\subseteq \{\tau_n\leq k\}, \forall n\in\mathbb{N}\implies V\subseteq \bigcap_n\{\tau_n\leq k\}\implies V\subseteq \biggl(\bigcap_n\{\tau_n\leq k\}\biggr)^\circ$. Hence, $\sigma\leq \mathbbm{1}_{\bigl(\bigcap_n\{\tau_n\leq k\}\bigr)^\circ}$ which establishes the claim. Hence, by the Baire category theorem, $\mathbbm{1}_{\{\tau\leq k\}}=\inf_n\mathbbm{1}_{\{\tau_n\leq k\}} \in\mathcal{F}_k$ and thus $\sup_n\tau_n$ is a stopping time. Similarly, we can argue for $\inf\tau_n$.
\end{proof}

\begin{remark}
Let $\tau$ be a stopping time and $(n_k)$ be an increasing sequence in $\mathbb{N}$. Then clearly there exists a positive, increasing continuous function $g:\mathbb{R}_+\cup\{\infty\}\to\mathbb{R}_+\cup\{\infty\}$ such that $g(k)=n_k, g(0)=0, g(\infty)=\infty$ and $g(t)\geq t$ for all $t$. Since $g$ is a continuous function, $g(\tau)$ is a well defined element of $\mathcal{E}^s_+$ with $\mathcal{R}(g(\tau)) = \mathbb{N}\cup\{\infty\}$. We claim that $g(\tau)$ is again a stopping time. Now, $\{g(\tau)= n_k\}=\{\tau=k\}$ and hence $\mathbbm{1}_{\{g(\tau)= n_k\}}=\mathbbm{1}_{\{\tau=k\}}\in\mathcal{F}_{k}\subseteq\mathcal{F}_{n_k}$. If $n\notin(n_k)_{k=1}^\infty$ then $\{g(\tau)=n\}=\emptyset$ and $\mathbbm{1}_{\{g(\tau)=n\}}=0\in\mathcal{F}_n$. Thus, $g(\tau)$ is a stopping time. We note that $g(\tau)$ is only determined by $\tau$ and $(n_k)$, and does not depend on the choice of $g$.
\end{remark}

For a given stopping time $\tau\in\mathcal{E}^s$ and $n\in\mathbb{N}$, the above theorem shows that $\tau\wedge n\mathbbm{1}$ is also a stopping time. So for an adapted process $(X_n)_{n\in\mathbb{N}}$, there exists a stopped element $X_{\tau\wedge n\mathbbm{1}}$ for every $n\in\mathbb{N}$. So we define the stopped process corresponding to $\tau$ and $(X_n)_{n\in\mathbb{N}}$ to be the sequence of stopped elements $(X_{\tau\wedge n\mathbbm{1}})_{n\in\mathbb{N}}$.\\

It is evident from above that a stopping time $\tau$ is bounded precisely when $\tau\in C(K)$. The above representation of stopping times enables us to do the same for the stopped processes. Let $\tau$ be a bounded stopping time. Then there exists $N\in\mathbb{N}$ such that $\mathcal{R}(\tau)\leq N$. Since $P_n - P_{n-1} = P_n^\prime$, given an adapted process $(X_n, \mathbb{F}_n)_{n\in\mathbb{N}}$ we have $P_n^\prime X_n = X_n.\mathbbm{1}_{\{\tau = n\}}$. Hence, $X_\tau = \sum_{n=1}^\infty X_n.\mathbbm{1}_{\{\tau = n\}}$. Moreover, as $\tau$ is a continuous function with values in $\mathbb{N}$, this implies that $\{\tau=n\}$ is a clopen set for every $n\leq N$ and $\{\tau=n\}=\emptyset$ when $n>N$. Therefore, $X_{\tau}=\sum_{n=1}^N X_n\cdot\mathbbm{1}_{\{\tau=n\}}$. Let $\omega\in K$, then evaluated pointwise, we have $X_\tau(\omega)=X_{\tau(\omega)}(\omega)$. Thus, the stopped process is $(X_{\tau\wedge n\mathbbm{1}})_{n\in\mathbb{N}}$ evaluated pointwise. 

\begin{theorem}
Let $(X_n, \mathbb{F}_n)_{n\in\mathbb{N}}$ be an adapted process on $\mathcal{E}$. Then, given a bounded discrete stopping time $\tau$, the stopped element $X_\tau\in \mathcal{E}$ is the function evaluated pointwise. That is, at  $\omega\in K$, the value of the stopped element is $X_{\tau(\omega)}(\omega)$.
\end{theorem}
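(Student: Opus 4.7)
The plan is to unfold the definition of $X_\tau$ directly using the $C^\infty(K)$ representation of stopping times established in Theorem~\ref{stopping_time_representation}. First I would exploit boundedness of $\tau$: since there exists $N\in\mathbb{N}$ with $P_N=I$, translating to the Stone space gives $\mathbbm{1}_{\{\tau\leq N\}}=\mathbbm{1}$, hence $\{\tau\leq N\}=K$. Because $\tau$ is continuous with range contained in the discrete set $\mathbb{N}\cup\{\infty\}$, each preimage $\{\tau=n\}$ is clopen, and the finitely many sets $\{\tau=1\},\dots,\{\tau=N\}$ partition $K$.

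Next I would identify each consecutive difference $P_n' = P_n - P_{n-1}$ with multiplication by $\mathbbm{1}_{\{\tau=n\}}$, which is immediate from the dictionary in Theorem~\ref{stopping_time_representation} combined with Lemma~\ref{lateral_sum}. Substituting into the definition $X_\tau = \sum_{n=1}^{\infty} (P_n - P_{n-1})X_n$ collapses the series to the finite sum
$$X_\tau \;=\; \sum_{n=1}^{N} X_n\cdot \mathbbm{1}_{\{\tau=n\}},$$
where the product is the pointwise $f$-algebra product in $C^\infty(K)$ and all terms with $n>N$ vanish because $\{\tau=n\}=\emptyset$.

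Finally I would evaluate this finite sum pointwise. For any $\omega\in K$ at which each of $X_1,\dots,X_N$ is finite, $\omega$ lies in exactly one clopen piece $\{\tau=n_0\}$ of the partition, and therefore $X_\tau(\omega) = X_{n_0}(\omega) = X_{\tau(\omega)}(\omega)$. The exceptional set $\bigcup_{n=1}^{N}\{X_n=\pm\infty\}$ is a finite union of nowhere dense sets and hence nowhere dense, so the asserted identity holds in the sense appropriate to $C^\infty(K)$.

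The only real obstacle is the bookkeeping between the band-projection formulation of $X_\tau$ in $\mathcal{E}$ and its pointwise description in $C^\infty(K)$; once the identification $P_n'\leftrightarrow\mathbbm{1}_{\{\tau=n\}}$ is invoked, the theorem reduces to the essentially trivial fact that a finite sum of disjointly supported pointwise products evaluates pointwise in the obvious way. No limiting or category argument is needed here, since boundedness truncates the sum to finitely many terms.
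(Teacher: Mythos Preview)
Your proposal is correct and follows essentially the same route as the paper: identify $P_n-P_{n-1}$ with multiplication by $\mathbbm{1}_{\{\tau=n\}}$, use boundedness to truncate the defining sum to $\sum_{n=1}^N X_n\cdot\mathbbm{1}_{\{\tau=n\}}$, and read off the value at each $\omega$ from the clopen partition. If anything, you are slightly more careful than the paper in isolating the nowhere dense set $\bigcup_{n\le N}\{X_n=\pm\infty\}$ on which the pointwise identity must be interpreted in the $C^\infty(K)$ sense.
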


The following proposition improves the result [Lemma 5.3, \cite{MR2093170}].
\begin{proposition}
Let $(X_n, \mathbb{F}_n)_{n\in\mathbb{N}}$ be an increasing adapted process. Then the following statements hold.
\begin{itemize}
    \item Let $\sigma$ and $\tau$ be two bounded stopping times. Then $X_{\sigma\vee\tau}=X_\sigma\vee X_\tau$ and $X_{\sigma\wedge\tau}=X_\sigma\wedge X_\tau$.
    \item Let $\tau_n$ be a sequence of bounded stopping times such that $\tau:=\sup_n\tau_n$ is bounded. Then $X_\tau=\sup_n X_{\tau_n}$.
    \item Let $\tau_n$ be a sequence of bounded stopping times. Then $X_{\inf_n \tau_n}=\inf_n X_{\tau_n}$.
\end{itemize}
\end{proposition}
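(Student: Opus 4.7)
The plan is to reduce each item to a pointwise statement on $K$, using the representation $X_\tau(\omega)=X_{\tau(\omega)}(\omega)$ just established together with Lemma \ref{convergence_criterion}, which converts order convergence of stopping times into pointwise convergence on a co-meagre subset. The key simplifying feature is that all stopping times involved take values in $\mathbb{N}$, so once pointwise convergence is known, it is in fact \emph{eventual equality} on a co-meagre set.

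For item (i), I would fix $\omega$ in the co-meagre set on which all the relevant pointwise representations hold and on which $k\mapsto X_k(\omega)$ is increasing. Since $(\sigma\vee\tau)(\omega)=\max(\sigma(\omega),\tau(\omega))$ and $k\mapsto X_k(\omega)$ is monotone,
$$X_{\sigma\vee\tau}(\omega)=X_{\max(\sigma(\omega),\tau(\omega))}(\omega)=\max(X_{\sigma(\omega)}(\omega),X_{\tau(\omega)}(\omega))=(X_\sigma\vee X_\tau)(\omega),$$
and analogously for $\wedge$. Equality on a co-meagre set yields equality in $\mathcal{E}$.

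For item (ii), I would first reduce to $(\tau_n)$ increasing by replacing $\tau_n$ with $\tau_1\vee\cdots\vee\tau_n$, which is still a bounded stopping time by item (i) and Lemma \ref{limit_stopping_time}. Since $\tau=\sup_n\tau_n$ is bounded, its values lie in $\{0,1,\dots,N\}$ for some $N$. Lemma \ref{convergence_criterion} applied to $\tau_n\uparrow\tau$ in $\mathcal{E}_+^s$ produces a co-meagre set on which $\tau_n(\omega)\uparrow\tau(\omega)$; because the values are integers bounded by $N$, for each such $\omega$ there is $n_0(\omega)$ with $\tau_n(\omega)=\tau(\omega)$ for all $n\geq n_0(\omega)$. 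Hence $X_{\tau_n}(\omega)=X_\tau(\omega)$ eventually on a co-meagre set. Combined with $X_{\tau_n}\leq X_{\tau_{n+1}}\leq X_\tau$ (holding pointwise on a co-meagre set by the monotonicity of the process at each $\omega$), this yields $\sup_n X_{\tau_n}=X_\tau$ in $\mathcal{E}$.

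Item (iii) proceeds similarly: I would reduce to $\tau_n\downarrow\tau$ by replacing $\tau_n$ with $\tau_1\wedge\cdots\wedge\tau_n$. Since $\tau\leq\tau_1$ is bounded, Lemma \ref{limit_stopping_time} gives that $\tau$ is a bounded stopping time. To produce pointwise convergence $\tau_n(\omega)\downarrow\tau(\omega)$ on a co-meagre set, apply Lemma \ref{convergence_criterion} to the increasing sequence $\tau_1-\tau_n\uparrow\tau_1-\tau$ in $\mathcal{E}_+$. The same integer-valued stabilization then yields $X_{\tau_n}(\omega)=X_\tau(\omega)$ eventually on a co-meagre set, and $X_{\tau_n}\geq X_{\tau_{n+1}}\geq X_\tau$ on the same set, whence $\inf_n X_{\tau_n}=X_\tau$ in $\mathcal{E}$. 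The main obstacle is the bookkeeping that upgrades pointwise identities on a co-meagre set to identities in $\mathcal{E}$: countably many such co-meagre sets must be intersected in parts (ii) and (iii), and it is precisely the discrete (integer) range of the stopping times that turns pointwise convergence into \emph{eventual equality}, so that no continuity-in-the-index hypothesis on the process is required.
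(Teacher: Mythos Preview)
Your proposal is correct and follows essentially the same route as the paper: pointwise evaluation via the representation $X_\tau(\omega)=X_{\tau(\omega)}(\omega)$, reduction to monotone sequences of stopping times, pointwise convergence on a co-meagre set, and the integer-valued ``eventual stabilization'' trick. The only cosmetic differences are that the paper argues item (i) at \emph{every} $\omega\in K$ (the bounded stopping time makes $X_\tau$ a finite sum, so no co-meagre bookkeeping is needed there), and in item (ii) the paper first notes the easy inequality $X_\tau\ge\sup_n X_{\tau_n}$ everywhere before passing to the co-meagre set for the reverse; your variant of item (iii) via $\tau_1-\tau_n\uparrow\tau_1-\tau$ is a perfectly good way to invoke Lemma~\ref{convergence_criterion} for a decreasing sequence, whereas the paper simply says ``similarly''.
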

\begin{proof}
By Lemma \ref{limit_stopping_time}, we have that $\sigma\vee\tau$ is a stopping time, and therefore, $X_{\sigma\vee\tau}$ is a continuous function in $\mathcal{E}$. Let $\omega\in K$, then $X_{(\sigma\vee\tau)(\omega)}(\omega)=X_{\sigma(\omega)\vee\tau(\omega)}(\omega)$. WLOG, $\tau(\omega)\geq \sigma(\omega)$. Then $X_{(\sigma\vee\tau)(\omega)}(\omega)=X_{\tau(\omega)}(\omega)$. Since $(X_n)$ is an increasing process, we have $X_{\tau(\omega)}\geq X_{\sigma(\omega)}$. Therefore, we have: $X_{(\sigma\vee\tau)(\omega)}(\omega)=X_{\tau(\omega)}(\omega)\vee X_{\sigma(\omega)}(\omega)$. Since the equality is valid for every point in $K$, we have $X_{\sigma\vee\tau}=X_\sigma\vee X_\tau$. Similarly, we can conclude for $X_{\sigma\wedge\tau}=X_\sigma\wedge X_\tau$.\\

WLOG, $(\tau_n)$ is increasing; otherwise, replace $\tau_n$ with $\tau_1\vee\dots\vee\tau_n$. Since $\tau_n$ and $\tau$ are bounded stopping times, $X_{\tau_n}$ and $X_\tau$ are well defined elements of $\mathcal{E}$. For an arbitrary $\omega\in K$, we have $X_{\tau(\omega)}(\omega)\geq X_{\tau_n(\omega)}(\omega)$ for every $n\in\mathbb{N}$. Therefore, $X_\tau\geq \sup X_{\tau_n}$. Let $\sigma\geq X_{\tau_n}$ for every $n\in\mathbb{N}$. By [Lemma 3.6, \cite{MR4366912}], there exists a co-meagre subset $D\subseteq K$ such that $\tau_n(\omega)\uparrow\tau(\omega)$ for every $\omega\in D$. Fix $\omega\in D$. Since $\tau$ is bounded, for large enough $n$ we have $\tau_n(\omega)=\tau(\omega)$. Hence, $\sigma(\omega)\geq X_{\tau_n(\omega)}(\omega)=X_{\tau(\omega)}(\omega)$. Therefore, by the Baire Category theorem $X_{\tau}=\sup_n X_{\tau_n}$. Similarly, we can prove for the infimum.
\end{proof}

The following theorem is a vector lattice version of the Début theorem \cite{MR2998762} for discrete stochastic processes. For any stopping time, there exists an adapted stochastic process and a subset of $\mathbb{R}$ such that the corresponding hitting time will be precisely this stopping time. The stochastic process can be chosen intuitively and similar to the classical probability case. It will be $1$ until just before the stopping time is reached, from which on, it will be $0$. The increasing process, therefore, first hits the set $\{1\}$ at the stopping time. 

\begin{theorem}\label{debut_theorem}
Let  $\tau$ be a discrete stopping time. Then there exists an adapted process $(X_n)_{n\in\mathbb{N}}$ such that $\tau(\omega)=\inf\{t\in\mathbb{N}: X_t(\omega)= 1\}$.
\end{theorem}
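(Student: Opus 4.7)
The plan is to take
$$X_n := \mathbbm{1}_{\{\tau \leq n\}}, \qquad n \in \mathbb{N},$$
so that $(X_n)$ is the increasing $\{0,1\}$-valued process that switches from $0$ to $1$ the instant $\tau$ is reached; then $\inf\{t \in \mathbb{N} : X_t(\omega) = 1\}$ will equal $\tau(\omega)$ by construction. The work splits into two parts: showing that $(X_n)$ is a genuine adapted process in $\mathcal{E}$, and verifying the hitting-time identity pointwise on all of $K$.

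For the first part, Lemma \ref{lateral_sum} (via the representation in Theorem \ref{stopping_time_representation}) shows that each $\{\tau = k\}$ is a clopen subset of $K$. Hence $\{\tau \leq n\} = \bigsqcup_{k \leq n} \{\tau = k\}$ is a finite disjoint union of clopen sets, is itself clopen, and therefore
$$X_n = \sum_{k \leq n} \mathbbm{1}_{\{\tau = k\}} \in \mathcal{E}.$$
The filtration identity $\mathbb{F}_k = \mathbb{F}_n\mathbb{F}_k$ for $k \leq n$ gives $\mathcal{F}_k \subseteq \mathcal{F}_n$, and each $\mathbbm{1}_{\{\tau = k\}} \in \mathcal{F}_k$ by Theorem \ref{stopping_time_representation}, so the finite sum lies in $\mathcal{F}_n$ and $(X_n)$ is adapted.

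For the second part, fix $\omega \in K$. If $\tau(\omega) = m \in \mathbb{N}$ then $\omega \in \{\tau \leq n\}$ iff $n \geq m$, so $X_n(\omega) = 0$ for $n < m$ and $X_n(\omega) = 1$ for $n \geq m$, whence the infimum is $m = \tau(\omega)$. If $\tau(\omega) = \infty$ then $\omega \notin \{\tau \leq n\}$ for any $n$, so $X_n(\omega) = 0$ for all $n$, and with the convention $\inf \emptyset = \infty$ the identity holds again. The only subtlety worth flagging is that, by Lemma \ref{lateral_sum}, $\tau$ takes the value $\infty$ not only on the complement of the supports but also on the boundary $\partial \bigcup_k U_k$; since $\{\tau \leq n\}$ is clopen, these boundary points are absorbed into the second case automatically. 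I therefore do not expect any serious obstacle: the representation of discrete stopping times developed earlier does essentially all the work, and the remainder is a pointwise check.
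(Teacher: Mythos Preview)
Your proposal is correct and follows essentially the same approach as the paper: both define $X_n=\mathbbm{1}_{\{\tau\le n\}}$, use the representation in Theorem~\ref{stopping_time_representation} to get adaptedness, and then verify the hitting-time identity pointwise (the paper by comparing the level sets $\{\tau=n\}$ with $\{X_{n-1}=0\}\cap\{X_n=1\}$, you by a direct case split on $\tau(\omega)$ finite or infinite). The only cosmetic difference is that the paper invokes the already-recorded fact $\mathbbm{1}_{\{\tau\le n\}}\in\mathcal{F}_n$ directly, whereas you rederive it as $\sum_{k\le n}\mathbbm{1}_{\{\tau=k\}}$; both are fine.
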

\begin{proof}
Let $S_n = \{\tau\leq n\}$ and let $X_n= \mathbbm{1}_{S_n}$ for all $n\in\mathbb{N}$. Since $\mathbbm{1}_{\{\tau\leq n\}}\in\mathcal{F}_n$, this implies that $X_n\in\mathcal{F}_n$ and thus, the stochastic process $(X_n)_{n\in\mathbb{N}}$ is increasing and adapted. Let $\sigma:K\to\overline{\mathbb{R}}$ defined via $\sigma(\omega)=\inf\{t\in\mathbb{N}: X_t(\omega)= 1\}$. Then $\{\sigma=n\}=\{X_{n-1}=0\}\cap \{X_n=1\}$. However, we also have $\{\tau=n\}=\{X_{n-1}=0\}\cap \{X_n=1\}$ for every $n\in\mathbb{N}$. Moreover, $\omega\in\{\tau=\infty\}\iff X_n(\omega)=0,\forall n\in\mathbb{N}\iff\sigma(\omega)=\inf\{\emptyset\}=\infty$. Hence, $\tau(\omega)=\inf\{t\in\mathbb{N}: X_t(\omega)= 1\}$.
\end{proof}

\begin{remark}
There is also the notion of stopping times for continuous stochastic processes in vector lattices, as discussed by Grobler in \cite{MR2741330, MR2861602, MR4192217}. Let $T\subset \mathbb{R}_+$ be an interval. When phrased in terms of the $C^\infty(K)$ representation, the stopping time for the filtration $(\mathbb{F}_t)_{t\in T}$ is an orthomorphism $\mathbb{S} \in \text{Orth}(\mathcal{E})_+$ such that $\mathbbm{1}_{\{\mathbb{S}\leq t\}^\circ} \in \mathfrak{F}_t$ for every $t\in T$. However, we can not emulate the above results regarding stopped processes for the continuous case. This is because an arbitrary union of nowhere dense sets will need not be nowhere dense. Moreover, we use the Baire category theorem at various points in this paper, which does not hold for arbitrary union of nowhere dense sets.
\end{remark}

\subsection*{Acknowledgment}
I would like to thank my advisor, Prof. Vladimir Troitsky, for his patient guidance, from the construction of the general idea and the argument to the details of writing this paper.

\bibliographystyle{alpha}
\bibliography{bibliography}

\end{document}